\def\misajour{29/11/2019} 
\def\C{{\mathbb C}} 
\def\N{{\mathbb N}}
\def\Q{{\mathbb Q}}
\def\Z{{\mathbb Z}}
\def\scrS{{\mathscr S}}
\def\kappatilde{\tilde{\kappa}}
\def\ftilde{\tilde{f}}
\def\fhat{\hat{f}}
\def\Mtilde{\widetilde{M}}
\def\Lambdatilde{\widetilde{\Lambda}}
\def\rmd{{\mathrm d}}
\def\rme{{\mathrm e}}
\numberwithin{equation}{section}
\newtheorem{theorem}{Theorem}
\newtheorem{corollary}{Corollary}
\newtheorem{proposition}{Proposition}
\newtheorem{lemma}{Lemma}
\newtheorem*{remark}{Remark}
\begin{document} 
 
 \null
 \vskip -3 true cm
 
 {\footnotesize 
 \hfill
Update: {\it \misajour}
}

 \vskip 3 true cm

 \begin{center}
 \LARGE
 \bf 
 
 On transcendental entire functions 
 
 with infinitely many derivatives 
 taking 
 
 integer values at 
two points

 \large
 
 \medskip
 by
 
 \medskip
 \sc
 Michel Waldschmidt
 \footnote{{\sc Mathematical Subject Classification 2010}. ---
30D15, 41A58.
 \\
 {\sc Keywords}. --- 
 Integer valued entire functions, Hurwitz functions, Lidstone polynomials, exponential type, P\'olya's Theorem.
 \\
 {\sc Acknowledgments}. --- On November 23, 2018, during the International Conference on Special Functions \&\ Applications (ICSFA-2018) which took place in 
 Amal Jyothi College on Engineering, Kanjirapalli, Kottayam (Kerala, India), M.A. Pathan gave a talk \emph{On Generalization of Taylor's series, Riemann Zeta Functions and Bernoulli Polynomials}, where the author became acquainted with Lidstone series. 
 }

 \end{center}
 
\section*{Abstract}

Given a subset $S=\{s_0, s_1\}$ of the complex plane with two points and an infinite subset $\scrS$ of $S\times \N$, where $\N=\{0,1,2,\dots\}$ is the set of nonnegative integers, we ask for a lower bound for the order of growth of a transcendental entire function $f$ such that $f^{(n)}(s)\in\Z$ for all $(s,n)\in\scrS$. 
 
We first take 
$\scrS=\{s_0,s_1\}\times 2\N$, where $2\N=\{0,2,4,\dots\}$ is the set of nonnegative even integers.
We prove that an entire function $f$ of sufficiently small exponential type such that $f^{(2n)}(s_0)\in\Z$ and $f^{(2n)}( s_1)\in\Z$ for all sufficiently large $n$ must be a polynomial. The estimate we reach is optimal, as we show by constructing a noncountable set of examples. The main tool, both for the proof of the estimate and for the construction of examples, is Lidstone polynomials.

Our second example is 
$(\{s_0\}\times (2\N+1))\cup( \{ s_1\}\times 2\N)$ (odd derivatives at $s_0$ and even derivatives at $ s_1$). We use analogs of Lidstone polynomials which have been introduced by J.M.~Whittaker and studied by I.J.~Schoenberg.

Finally, using results of W.~Gontcharoff, A. J.~Macintyre and J.M.~Whittaker, we prove lower bounds for the exponential type of a transcendental entire function $f$ such that, for each sufficiently large $n$, one at least of the two numbers $f^{(n)}(s_0)$, $f^{(n)}(s_1)$ is in $\Z$.

\tableofcontents

\section{Introduction}\label{S:Introduction}

The order of an entire function $f$ is 
$$
\varrho(f)=\limsup_{r\to\infty} \frac{\log\log |f|_r}{\log r}
\;
\text{ where } 
\;
|f|_r=\sup_{|z|=r}|f(z)|.
$$
The exponential type of an entire function is 
$$
\tau(f)= \limsup_{r\to\infty} \frac{ \log |f|_r}{r}\cdotp
$$
If the exponential type is finite, then $f$ has order $\le 1$. If $f$ has order $<1$, then the exponential type is $0$.

An alternative definition 
 is the following: $f$ is of exponential type $\tau(f)$ if and only if, for all $z_0\in\C$, 
\begin{equation}\label{eq:type}
\limsup_{n\to\infty} |f^{(n)}(z_0)|^{1/n}= \tau(f),
\end{equation}
where $f^{(n)}$ denotes the $n$--th derivative $(\rmd /\rmd z)^n f$ of $f$. 
The equivalence between the two definitions follows from Cauchy's inequalities \eqref{Equation:CauchyInequality} and Stirling's Formula \eqref{Equation:Stirling}. If \eqref{eq:type} is true for one $z_0\in\C$, then it is true for all $z_0\in\C$.

Given a finite set of points $S$ in the complex plane and an infinite subset $\scrS$ of $S\times \N$, where $\N=\{0,1,2,\dots\}$ is the set of nonnegative integers, we ask for a lower bound for the order of growth of a transcendental entire function $f$ such that $f^{(n)}(s)\in\Z$ for all $(s,n)\in\scrS$. This question has been studied by a number of authors in the special case where $\scrS=S\times \N$. When $S=\{0\}$, a function satisfying these conditions, namely $f^{(n)}(0)\in\Z$ for all $n\ge 0$, is called a \emph{Hurwitz function}. The order of a transcendental Hurwitz function is $\ge 1$ \cite{zbMATH02601316} 
-- see Proposition \ref{Prop:Polya} below. 
Assume now $S=\{0,1,\dots,k-1\}$ with $k\ge 2$. 
According to \cite[Th.~1]{MR0035822}, 
the order of a transcendental function satisfying $f^{(n)}(\ell)\in\Z$ for all $\ell=0,1,\dots,k-1$ and $n\ge 0$ is at least $k$. The example of the function $\exp\bigl(z(z-1)\cdots(z-k+1)\bigr)$ shows that the bound for the order is sharp.
For $k=2$, refined estimates are obtained in  
\cite[\S~3]{MR0159945} 
and \cite[\S~4]{MR0192030}. 
See also 
\cite[\S~7 and \S~8]{MR0280715} 
and the survey \cite{MR789191} 
with 59 references.

If we replace the assumption $f^{(n)}(s)\in\Z$ %
with $f^{(n)}(s)=0$ for all $(s,n)\in\scrS$, we come across a question which has been the object of extensive works. It is the main topic of 
\cite[Chap.~III]{zbMATH02532117} 
and
\cite[Chap.~3]{zbMATH03416363}. 
It is related with the interpolation problem of the existence and unicity of an entire function $f$ for which the values $f^{(n)}(s)$ for $(s,n)\in\scrS$ are given. For $S=\{0\}$, the Taylor expansion solves the interpolation problem. The next most often studied case is $S=\{0,1\}$ and $\scrS=S\times 2\N$, where $\N=\{0,2,4,\dots\}$ is the set of nonnegative even integers; the basic tool is given by Lidstone polynomials.

In the present paper, we consider a set $S=\{s_0 , s_1\}$ of only two complex numbers (with only a short excursion to the case where $S$ may have more than two points in \S~\ref{Th:GontcharoffMacintyre} and \S~\ref{SS:GontcharoffMacintyre}. We plan to investigate more general sets in later papers). Using an argument going back to P\'olya, we reduce the study of entire functions, the derivatives of even order of which take integer values at two points, to the study of those functions where the same derivatives vanish at these two points. Our main assumption on the growth of our functions $f$ is
\begin{equation}\label{eq:maingrowthcondition}
\limsup_{r\to\infty}\rme^{-r}\sqrt r |f|_r<\frac{1}{\sqrt{2\pi}}
\rme^{-\max\{|s_0|,| s_1|\}}.
\end{equation}
The exponential type of such a function is $\le 1$; in the other direction, a function of exponential type $<1$ satisfies \eqref{eq:maingrowthcondition}. 
We will prove in \S~\ref{S:GeneralizationPolya}
that, for an entire function $f$ satisfying the growth condition \eqref{eq:maingrowthcondition} and for $|z_0|\le \max\{|s_0|,| s_1|\}$, the set of $n\ge 0$ for which $f^{(2n)}(z_0)\in\Z\setminus\{0\}$ is finite.

In \S~\ref{S:Lidstone}, 
 we introduce the so--called {\it Lidstone polynomials} and we prove several estimates for their growth.
 
 In \S~\ref{S:Even}, we give a lower bound for the growth of transcendental entire functions satisfying $f^{(2n)}(s_0)\in\Z$ and $f^{(2n)}( s_1)\in\Z$ for all sufficiently large $n$. 
 On the other hand, we will also show 
 that there are transcendental entire functions $f$ of order $0$ for which $f^{(2n)}(s_0)=0$ for all $n\ge 0$ and $f^{(2n)}( s_1)=0$ for infinitely many $n$.

In \S~\ref{S:OddEven}, we consider a variant by studying 
entire functions which satisfy 
$f^{(2n+1)}(s_0)\in\Z$ and $f^{(2n)}( s_1)\in\Z$ for all sufficiently large $n$. The proofs rest on analogs of Lidstone polynomials which have been introduced 
by J.M.~Whittaker in 1933
and studied by 
 I.J.~Schoenberg in 1936.

In \S~\ref{S:SequencesDerivatives}, we give a lower bound for the growth of transcendental entire functions satisfying 
the property that for each sufficiently large $n$, one at least of the two numbers $f^{(n)}(s_0)$, $f^{(n)}(s_1)$ is in $\Z$. 
In the periodic case we use results of W.~Gontcharoff (1930) and A.J.~Macintyre (1954), in the general case we use results of W.~Gontcharoff (1930) and J.M.~Whittaker (1933).
 
\subsection{Derivatives of even order at two points}

Our first result is a lower bound for the growth of a transcendental entire function, having derivatives of even order at two points $s_0$ and $ s_1$ in $\Z$.
 
\begin{theorem}\label{Th:TwoPointsEven}
Let $s_0, s_1$ be two distinct complex numbers and $f$ an entire function of exponential type $\tau(f)$ satisfying $f^{(2n)}(s_0)\in\Z$ and $f^{(2n)}( s_1)\in\Z$ for all sufficiently large $n$. Assume $f$ satisfies the growth condition \eqref{eq:maingrowthcondition}.
Then there exists a polynomial $P\in\C[z]$ and complex numbers $c_1,c_2,\dots,c_L$ with 
$$
L \pi \le | s_1-s_0| \tau(f)
$$
such that 
$$
f(z)=P(z)+\sum_{\ell=1}^L c_\ell \sin\left(\ell\pi\frac{z-s_0}{ s_1-s_0}
\right).
$$
\end{theorem}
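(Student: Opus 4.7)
The plan is to combine the P\'olya-type reduction announced in \S\ref{S:GeneralizationPolya} with the Lidstone expansion machinery developed in \S\ref{S:Lidstone}. The first step is to reduce the problem to a vanishing statement. Since $|s_0|,|s_1|\le\max\{|s_0|,|s_1|\}$, the growth hypothesis \eqref{eq:maingrowthcondition} together with the P\'olya-type result from \S\ref{S:GeneralizationPolya} implies that for each $z_0\in\{s_0,s_1\}$ the set of $n\ge 0$ with $f^{(2n)}(z_0)\in\Z\setminus\{0\}$ is finite; combined with the integrality hypothesis for large $n$, this yields an integer $N$ such that $f^{(2n)}(s_0)=f^{(2n)}(s_1)=0$ for every $n\ge N$. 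Using the Lidstone polynomials as a basis, I would construct a polynomial $P_0$ of degree at most $2N-1$ with $P_0^{(2n)}(s_0)=f^{(2n)}(s_0)$ and $P_0^{(2n)}(s_1)=f^{(2n)}(s_1)$ for $0\le n<N$. The function $g:=f-P_0$ then has the same exponential type as $f$ and satisfies $g^{(2n)}(s_0)=g^{(2n)}(s_1)=0$ for every $n\ge 0$.

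The heart of the proof is to show that any such $g$ is a finite $\C$-linear combination of the functions $\phi_\ell(z):=\sin\!\bigl(\ell\pi(z-s_0)/(s_1-s_0)\bigr)$ for $\ell\ge 1$. That each $\phi_\ell$ satisfies the vanishing condition is immediate, since even derivatives of $\sin$ are $\pm\sin$ and $\sin$ vanishes at both $0$ and $\ell\pi$. For the converse I would invoke the Lidstone representation proved in \S\ref{S:Lidstone}, which expresses an arbitrary entire function in the form
\begin{equation*}
g(z)=\sum_{n=0}^{M-1}\bigl[g^{(2n)}(s_0)\Lambda_n^-(z)+g^{(2n)}(s_1)\Lambda_n^+(z)\bigr]+R_M(z)
\end{equation*}
with translated Lidstone polynomials $\Lambda_n^\pm$ and remainder $R_M$. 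Under the vanishing hypothesis the finite sum is identically zero, so $g=R_M$ for every $M$. Combining the growth estimates on the Lidstone polynomials established in \S\ref{S:Lidstone} with the fact that $g$ has finite exponential type, one then shows that the only entire functions compatible with $g=R_M$ for every $M$ are finite combinations of the $\phi_\ell$, and that the type constraint allows only finitely many terms to appear.

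Once a decomposition $g=\sum_{\ell=1}^L c_\ell\phi_\ell$ with $c_L\ne 0$ is in hand, the bound $L\pi\le|s_1-s_0|\tau(f)$ is immediate, because the exponential type of $\phi_\ell$ equals $\ell\pi/|s_1-s_0|$ and hence $\tau(g)=L\pi/|s_1-s_0|$; since $\tau(g)=\tau(f)$, this gives the inequality. Setting $P:=P_0$ then yields the stated formula. The main obstacle, in my view, is the converse in the second step: proving that the Lidstone interpolation problem, restricted to entire functions of finite exponential type, has kernel exactly equal to the finite-dimensional span of the sines $\phi_\ell$ with $\tau(\phi_\ell)\le\tau(f)$. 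This is a completeness statement that depends on the sharp growth estimates for Lidstone polynomials to be proved in \S\ref{S:Lidstone}, presumably combined with a Phragm\'en--Lindel\"of argument applied to the remainder $R_M$.
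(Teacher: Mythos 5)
Your proposal follows essentially the same route as the paper: Corollary \ref{Corollary:Polya} (the P\'olya-type statement of \S\ref{S:GeneralizationPolya}) turns the integrality hypothesis into $f^{(2n)}(s_0)=f^{(2n)}(s_1)=0$ for large $n$, one subtracts the Lidstone-interpolating polynomial built from the $\Lambdatilde_n$, rescales to the points $0,1$, and concludes with the two-point expansion theorem, the type bound coming out exactly as you describe. The ``completeness'' step you single out as the main obstacle is precisely Proposition \ref{Proposition:LidstoneSine}, which the paper does not reprove but quotes from the literature (Poritsky, Whittaker, Schoenberg, Buck), so no new Phragm\'en--Lindel\"of argument is needed at that point.
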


Recall that assumption \eqref{eq:maingrowthcondition} implies $\tau(f)\le 1$. It follows from Theorem \ref{Th:TwoPointsEven} that, if $| s_1-s_0|\le \pi$, then any transcendental entire function $f$ satisfying $f^{(2n)}(s_0)\in\Z$ and $f^{(2n)}( s_1)\in\Z$ for all sufficiently large $n$ has exponential type $\ge 1$. Here are examples of such functions of exponential type $1$. Let $a_0\in\Z$ and $a_1\in\Z$ with $(a_0,a_1)\not=(0,0)$. Define
$$
f_{a_0,a_1}(z)=a_0\frac{\sinh(z- s_1)}{\sinh(s_0- s_1)}+
a_1\frac{\sinh(z- s_0)}{\sinh(s_1- s_0)}\cdotp
$$
Then $f_{a_0,a_1} (s_0)=a_0$, $f_{a_0,a_1} ( s_1)=a_1$ and $f_{a_0,a_1}''=f_{a_0,a_1}$, hence $f_{a_0,a_1}^{(2n)}(s_0)=a_0$ and $f_{a_0,a_1}^{(2n)}( s_1)=a_1$ for all $n\ge 0$.

In the case $| s_1-s_0|\ge \pi$, we deduce from Theorem \ref{Th:TwoPointsEven} that any transcendental entire function $f$ satisfying $f^{(2n)}(s_0)\in\Z$ and $f^{(2n)}( s_1)\in\Z$ for all sufficiently large $n$ has exponential type $\ge \pi/| s_1-s_0|$. 
For $\ell\ge 1$, the function 
$$
f_\ell(z)=\sin\left(\ell\pi\frac{z-s_0}{ s_1-s_0}\right)
$$
has exponential type $\ell \pi/| s_1-s_0|$ and satisfies $f_\ell^{(2n)}(s_0)=f^{(2n)}( s_1)=0$ for all $n\ge 0$.

\begin{corollary}\label{Corollary:twopointseven}
Let $f$ be an entire function satisfying \eqref{eq:maingrowthcondition} for which $f^{(2n)}(s_0)\in\Z$ and $f^{(2n)}( s_1)\in\Z$ for all sufficiently large $n$. Then the set of $n\ge 0$ such that $f^{(2n)}(s_0)\not=0$ is finite, and also the set of $n\ge 0$ such that $f^{(2n)}( s_1)\not=0$ is finite.
If the exponential type of $f$ satisfies $\tau(f)<\frac{\pi}{| s_1-s_0|}$, then $f$ is a polynomial. 
\end{corollary}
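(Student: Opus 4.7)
The plan is to apply Theorem \ref{Th:TwoPointsEven} directly and read off both conclusions from the explicit representation of $f$. The hypotheses of the corollary are exactly those of the theorem, so the theorem furnishes an integer $L\ge 0$ satisfying $L\pi\le |s_1-s_0|\tau(f)$, a polynomial $P\in\C[z]$, and complex numbers $c_1,\dots,c_L$ such that
$$
f(z)=P(z)+\sum_{\ell=1}^L c_\ell \sin\left(\ell\pi\frac{z-s_0}{s_1-s_0}\right).
$$

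For the second assertion, the assumption $\tau(f)<\pi/|s_1-s_0|$ combined with $L\pi\le|s_1-s_0|\tau(f)$ forces $L<1$, hence $L=0$, and $f=P$ is a polynomial.

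For the first assertion, the observation is that for each $\ell\ge 1$ the sine $\sin(\ell\pi(z-s_0)/(s_1-s_0))$ vanishes at $z=s_0$ (where the argument is $0$) and at $z=s_1$ (where the argument is $\ell\pi$). Since the second derivative multiplies this function by the constant $-(\ell\pi/(s_1-s_0))^2$, every even-order derivative is a scalar multiple of the same sine and therefore also vanishes at $s_0$ and at $s_1$. Consequently
$$
f^{(2n)}(s_0)=P^{(2n)}(s_0) \quad\text{and}\quad f^{(2n)}(s_1)=P^{(2n)}(s_1)
$$
for every $n\ge 0$, and because $P$ is a polynomial both sides vanish as soon as $2n>\deg P$, which yields the finiteness of the two sets.

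There is no substantial obstacle here: all the work has been absorbed into Theorem \ref{Th:TwoPointsEven}, and the corollary reduces to bookkeeping. The only elementary point worth isolating is that the functions $\sin(\ell\pi(z-s_0)/(s_1-s_0))$ are eigenfunctions of $(\rmd/\rmd z)^2$ vanishing at both endpoints $s_0$ and $s_1$, which is exactly what makes them, together with a polynomial tail, the building blocks adapted to the interpolation data $(s_i,2n)$.
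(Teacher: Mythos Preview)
Your proof is correct and follows the intended route: the corollary is an immediate consequence of Theorem~\ref{Th:TwoPointsEven}. One minor remark on the first assertion: in the paper the finiteness of $\{n\ge 0\mid f^{(2n)}(s_j)\neq 0\}$ is actually obtained at the very start of the proof of Theorem~\ref{Th:TwoPointsEven}, directly from the growth hypothesis \eqref{eq:maingrowthcondition} via Corollary~\ref{Corollary:Polya}, rather than by reading it off from the final representation. Your argument---noting that each $\sin(\ell\pi(z-s_0)/(s_1-s_0))$ is an eigenfunction of $(\rmd/\rmd z)^2$ vanishing at both $s_0$ and $s_1$, so that $f^{(2n)}(s_j)=P^{(2n)}(s_j)$---is an equally valid (and perhaps more self-contained) way to reach the same conclusion from the statement of the theorem alone.
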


We now show hat the assumption \eqref{eq:maingrowthcondition} on the growth of $f$ in Corollary \ref{Corollary:twopointseven} is essentially best possible. 
We denote by $\nu$ the unique positive real number satisfying $e^\nu+e^{-\nu}=4\nu$. The numerical value is $\nu = 2.177\, 3\dots$ Both $\nu$ and $\rme^\nu$ are transcendental.

\begin{theorem}\label{Th:TwoPointsEvenExistence}
Let $s_0, s_1$ be two distinct complex numbers such that
\begin{equation}\label{equation:majoration|s0-s1|even}
|s_1-s_0|<\nu.
\end{equation}
Then there exist a constant $\gamma$ and an uncountable set of transcendental entire functions $f$ 
satisfying $f^{(2n)}(s_0)=0$ and $f^{(2n)}( s_1)\in\Z$ for all $n\ge 0$, for which the set 
$\{
n\ge 0\; \mid \; f^{(2n)}( s_1)\not=0\}
$
is infinite, and such that 
\begin{equation}\label{eq:maingrowthconditiongamma}
\limsup_{r\to\infty}\rme^{-r}\sqrt r |f|_r\le \gamma
\end{equation}
\end{theorem}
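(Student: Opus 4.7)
The plan is to construct $f$ explicitly as a Lidstone series with integer coefficients, and to use the freedom in choosing these coefficients to produce an uncountable family. Let $\omega = s_1 - s_0$ and let $B_n(z) = \omega^{2n}\Lambda_n\!\bigl((z-s_0)/\omega\bigr)$ be the translated, scaled Lidstone polynomials from \S\ref{S:Lidstone}; they satisfy $B_n^{(2k)}(s_0)=0$ and $B_n^{(2k)}(s_1)=\delta_{n,k}$ for every $k \ge 0$. For any bounded integer sequence $(a_n)_{n\ge 0}$ the series
$$
f(z) = \sum_{n=0}^\infty a_n B_n(z)
$$
converges on compact subsets of $\C$ thanks to the exponential decay of $|B_n|_r$ in $n$ proved in \S\ref{S:Lidstone}, and defines an entire function with $f^{(2n)}(s_0)=0$ and $f^{(2n)}(s_1)=a_n$ for every $n \ge 0$. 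It therefore suffices to exhibit a constant $\gamma$ and an uncountable family of sequences $(a_n)\in\{0,1\}^{\N}$, each with infinitely many $1$'s, for which \eqref{eq:maingrowthconditiongamma} holds with this common $\gamma$.

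The key step is the growth estimate. Feeding the sharp bound on $|B_n|_r$ from \S\ref{S:Lidstone} into the series and using the generating identity $\sum_n t^{2n}\Lambda_n(w) = \sinh(tw)/\sinh(t)$, a naive triangle-inequality bound yields only $|f|_r = O(\rme^r)$, which is weaker than \eqref{eq:maingrowthconditiongamma} by the crucial factor $\sqrt r$. I would extract that missing factor by rewriting $\sum a_n B_n$ as a contour integral against the Lidstone generating function and carrying out a saddle-point analysis. The transcendental constant $\nu$, characterized by $\rme^{\nu}+\rme^{-\nu}=4\nu$ (equivalently $\cosh\nu=2\nu$), arises as the critical value of $|\omega|$ at which this saddle-point estimate degenerates; the strict inequality $|\omega|<\nu$ provides a uniform margin that absorbs any coefficient choice in $\{0,1\}$ and yields a common constant $\gamma$ depending only on $|s_0|$ and $|s_1|$.

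With the uniform bound in hand, the remaining verifications are immediate: sequences in $\{0,1\}^{\N}$ with infinitely many $1$'s form an uncountable set, distinct sequences produce distinct $f$ (since they disagree in at least one value $f^{(2n)}(s_1)$), and each such $f$ is transcendental because it has infinitely many nonzero even derivatives at $s_1$. The main obstacle is precisely the combined task of extracting the $\sqrt r$ improvement and identifying the exact threshold $\nu$: without the saddle-point cancellation argument only the weaker bound $|f|_r = O(\rme^r)$ is available, and a less precise estimate would not recover the specific transcendental value $\nu \approx 2.1773$ that appears in \eqref{equation:majoration|s0-s1|even}.
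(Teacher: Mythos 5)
The series setup in your first paragraph matches the paper's (the polynomials you call $B_n$ are its $\Lambdatilde_n(z-s_0)$), but the heart of the matter is the growth estimate, and there your proposal breaks down. The claim that a saddle-point argument yields \eqref{eq:maingrowthconditiongamma} with a margin that ``absorbs any coefficient choice in $\{0,1\}$'' cannot be correct: taking $a_n=1$ for all $n$, the generating identity \eqref{Equation:shcosech} with $t=s_1-s_0$ gives $f(z)=\sinh(z-s_0)/\sinh(s_1-s_0)$, for which $\rme^{-r}\sqrt r\,|f|_r$ grows like a constant times $\sqrt r$, so \eqref{eq:maingrowthconditiongamma} fails; more generally, any choice whose nonzero coefficients occupy a positive proportion of the indices $n$ of size comparable to $r$ produces $|f|_r$ of order $\sqrt r\,\rme^r$ or worse. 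Thus no uniform $\gamma$ exists for the family you describe, and the un-executed saddle-point step is not a technical omission but an impossibility. Note also that in the paper the threshold $\nu$ does not come from a saddle point: it enters only through part (ii) of Lemma \ref{Lemma:upperbound|Lambdatilde|}, where the recursion \eqref{eq:Costabiletilde} forces the constant $\gamma_2=2/\bigl(4|s_1-s_0|-\rme^{|s_1-s_0|}+\rme^{-|s_1-s_0|}\bigr)$, which is positive exactly when $|s_1-s_0|<\nu$.

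The missing idea is lacunarity of the support of the coefficients. The paper puts the nonzero coefficients only at indices $N_k$ with $N_0=1$, $N_{k+1}=\gamma_4N_k^2$ (and obtains uncountably many functions by choosing signs $e_{N_k}\in\{+1,-1\}$, not arbitrary $0$--$1$ sequences). Then, for large $r$ and $k$ minimal with $N_k>\sqrt{r+|s_0|}$, the sum splits into three ranges: the finitely many indices $n\le N_{k-1}\le\sqrt{2r}$ contribute at most $\gamma_1 r^{3\sqrt r}<\rme^r/r$ by part (i) of Lemma \ref{Lemma:upperbound|Lambdatilde|}; the single index $N_k$ in the dangerous middle range contributes at most $\gamma_2\,\rme^{r+|s_0|+1/(4r)}/\sqrt{2\pi r}$ by part (ii), which is the only place \eqref{equation:majoration|s0-s1|even} is used; and the tail $n\ge N_{k+1}\ge\gamma_4(r+|s_0|)$ contributes less than $1$ by Corollary \ref{Cor:uperboundLidstoneWhittaker}. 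It is precisely the guarantee that at most one nonzero coefficient sits between $\sqrt r$ and $\gamma_4 r$ that recovers the $\sqrt r$-precision; without specifying such a sparse family and verifying the bound for it, your argument has no mechanism to reach \eqref{eq:maingrowthconditiongamma}.
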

 
The conclusion is also true in the case $| s_1-s_0|> \pi$. Indeed, it follows from Theorem \ref{Th:TwoPointsEven} that in this case, if $f$ is an entire function of exponential type $<1$ satisfying $f^{(2n)}(s_0)\in\Z$ and $f^{(2n)}( s_1)\in\Z$ for all sufficiently large $n$,
 then the sets $\{n\ge 0\;\mid\;f^{(2n)}(s_0)\not=0\}$ and $\{n\ge 0\;\mid\;f^{(2n)}(s_1)\not=0\}$ are finite. Hence the set of these functions is the set of functions of the form
$$
f(z)=P(z)+\sum_{\ell=1}^L c_\ell \sin\left(\ell\pi\frac{z-s_0}{ s_1-s_0}
\right),
$$
where $P\in\C[z]$, 
$L$ is the larger integer $< |s_1-s_0|/\pi$ and $c_1,\dots,c_L$ are arbitrary complex numbers. Since $L\ge 1$, this set of functions $f$ has the power of continuum.

One might expect that it suffices to assume $|s_1-s_0|<\pi$ instead of \eqref{equation:majoration|s0-s1|even}. Another question is about the case $| s_1-s_0|=\pi$: the function $\sin z$ has exponential type $1$ but does not satisfy \eqref{eq:maingrowthconditiongamma}.

 From Corollary \ref{Corollary:twopointseven} one deduces the same result for the set $S\times(2\N+1)$ (odd order of the derivatives): it suffices to use Corollary \ref{Corollary:twopointseven} for the first derivative of the given function. 
 
 \begin{corollary}\label{Corollary:oddodd}
 Let $f$ be an entire function satisfying \eqref{eq:maingrowthcondition} for which $f^{(2n+1)}(s_0)\in\Z$ and $f^{(2n+1)}( s_1)\in\Z$ for all sufficiently large $n$. Then the set of $n\ge 0$ such that $f^{(2n+1)}(s_0)\not=0$ is finite, and also the set of $n\ge 0$ such that $f^{(2n+1)}( s_1)\not=0$ is finite.
If the exponential type of $f$ satisfies $\tau(f)<\frac{\pi}{| s_1-s_0|}$, then $f$ is a polynomial. 
 \end{corollary}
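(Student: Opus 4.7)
The plan is to reduce the statement to Corollary \ref{Corollary:twopointseven} by setting $g=f'$, as indicated by the short remark preceding the statement. Since $g^{(2n)}=f^{(2n+1)}$, the arithmetic hypothesis on $f$ translates verbatim into the arithmetic hypothesis on $g$: $g^{(2n)}(s_0)\in\Z$ and $g^{(2n)}(s_1)\in\Z$ for all sufficiently large $n$.

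The key step is to verify that $g=f'$ also satisfies the growth condition \eqref{eq:maingrowthcondition}. Cauchy's integral formula gives
$$
|f'|_r\le R^{-1}|f|_{r+R}
\qquad\text{for every }R>0,
$$
so writing $A=\limsup_{r\to\infty}\rme^{-r}\sqrt r\,|f|_r$ one obtains, for each fixed $R>0$,
$$
\limsup_{r\to\infty}\rme^{-r}\sqrt r\,|f'|_r \;\le\; \frac{\rme^{R}}{R}\,A.
$$
This is where the main obstacle lies: the constant $\rme^{R}/R$ is minimised at $R=1$ with value $\rme$, which is off by a multiplicative factor $\rme$ from what is needed. One must therefore argue more carefully, exploiting that $f$ has exponential type $\le 1$ so that the behaviour of $|f|_{r+R}$ is controlled more tightly than the bare bound coming from \eqref{eq:maingrowthcondition} allows (for instance by choosing $R=R(r)$ to vary with $r$, or by a Bernstein-type estimate for entire functions of exponential type $\le 1$), to conclude that $g=f'$ indeed satisfies \eqref{eq:maingrowthcondition}.

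Once $g$ falls under the hypotheses of Corollary \ref{Corollary:twopointseven}, the first conclusion of that corollary says that the sets $\{n\ge 0\,:\,g^{(2n)}(s_i)\not=0\}=\{n\ge 0\,:\,f^{(2n+1)}(s_i)\not=0\}$ are finite for $i=0,1$, which is the first claim of Corollary \ref{Corollary:oddodd}. For the second claim, the equivalent definition \eqref{eq:type} of exponential type applied to $f$ and $f'$ gives $\tau(g)=\tau(f)$; hence the hypothesis $\tau(f)<\pi/|s_1-s_0|$ becomes $\tau(g)<\pi/|s_1-s_0|$, and Corollary \ref{Corollary:twopointseven} then forces $g$ to be a polynomial. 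Since $f$ is an antiderivative of $g$, it follows that $f$ too is a polynomial, completing the proof.
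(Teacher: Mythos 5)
Your reduction to $g=f'$ is the intended one, but as written the proof has a genuine gap at exactly the point you flag: you never establish that $f'$ satisfies \eqref{eq:maingrowthcondition}, and the two fixes you gesture at do not obviously close it. Cauchy's inequality $|f'|_r\le R^{-1}|f|_{r+R}$ loses at least a factor $\rme$ no matter how $R=R(r)$ is chosen (for fixed $R$ the constant $\rme^{R}/R$ is at least $\rme$, and letting $R\to 0$ or $R\to\infty$ with $r$ only makes it worse), while Bernstein-type inequalities bound $\sup_{\R}|f'|$ by $\tau(f)\sup_{\R}|f|$ on the real line and say nothing about the weighted maximum modulus $\rme^{-r}\sqrt r\,|f|_r$ on circles. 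So the central step of your argument is left open, and it is not even clear that the transferred inequality holds with the strict constant that \eqref{eq:maingrowthcondition} requires.

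The point you are missing is that this transfer is unnecessary, because \eqref{eq:maingrowthcondition} is only ever used through Proposition \ref{Proposition:Polya} and Corollary \ref{Corollary:Polya}, whose conclusion concerns \emph{all} derivatives $f^{(n)}$, odd orders included. Applying Corollary \ref{Corollary:Polya} to $f$ itself (with $A=\max\{|s_0|,|s_1|\}$) gives $|f^{(2n+1)}(s_i)|<1$ for all sufficiently large $n$, so the integrality hypothesis forces $f^{(2n+1)}(s_0)=f^{(2n+1)}(s_1)=0$ for all large $n$; that is the first assertion, with no growth condition needed for $f'$. For the second assertion, set $g=f'$: its even-order derivatives at $s_0$ and $s_1$ vanish for all large $n$, and $\tau(g)=\tau(f)<\pi/|s_1-s_0|$, as you correctly note via \eqref{eq:type}. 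Now run the proof of Theorem \ref{Th:TwoPointsEven} on $g$: subtract the finite Lidstone-expansion polynomial, rescale, and apply Proposition \ref{Proposition:LidstoneSine}; since $|s_1-s_0|\tau(g)<\pi$, the sine sum is empty, so $g$ is a polynomial and hence so is $f$. In short, what you should borrow from Corollary \ref{Corollary:twopointseven} is its proof, whose only use of \eqref{eq:maingrowthcondition} is already supplied by $f$, rather than its statement, whose hypothesis on $f'$ you cannot verify.
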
 
 
A polynomial is determined only up to an additive constant by its derivatives of odd order at two points. An expansion of a polynomial in terms of these derivatives, analogous to \eqref{Equation:LidstoneExpansionPolynomials} below, is obtained by taking primitives of the Lidstone polynomials (defined up to an additive constant -- notice that $\Lambda'_{n+1}$ is a primitive of $\Lambda_n$). Such expansions have been studied in 
 \cite[\S~3]{MR3849168} 
 under the name \emph{Even Lidstone--type sequences}. 
 
\subsection{Derivatives of odd order at one point and even at the other}

The next result deals with $\scrS=(\{s_0\}\times (2\N+1))\cup (\{ s_1\}\times 2\N)$.

\begin{theorem}\label{Th:TwoPointsOddEven}
Let $s_0, s_1$ be two distinct complex numbers. Let $f$ be an entire function of exponential type $\tau(f)$ satisfying $f^{(2n+1)}(s_0)\in\Z$ and $f^{(2n)}( s_1)\in\Z$ for all sufficiently large $n$. Assume $f$ satisfies \eqref{eq:maingrowthcondition}.
Then there exists a polynomial $P\in\C[z]$ and complex numbers $c_0,c_1,\dots,c_L$ with 
$$
(2L+1)\frac \pi 2 \le | s_1-s_0| \tau(f)
$$
such that 
$$
f(z)=P(z)+\sum_{\ell=0}^L c_\ell \cos\left(\frac{(2\ell+1)\pi}{2}\cdot \frac{z-s_0}{ s_1-s_0}
\right).
$$
\end{theorem}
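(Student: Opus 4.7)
The plan is to follow the template of Theorem \ref{Th:TwoPointsEven}, using the Whittaker--Schoenberg polynomials (the analogs of Lidstone polynomials for the mixed odd/even interpolation problem, to be developed in \S~\ref{S:OddEven}) in place of Lidstone polynomials, and replacing the ``missing modes'' $\sin(\ell\pi(z-s_0)/(s_1-s_0))$ by the cosines
$$
\varphi_\ell(z) = \cos\left(\frac{(2\ell+1)\pi}{2}\cdot\frac{z-s_0}{s_1-s_0}\right), \qquad \ell = 0, 1, 2, \dots
$$
A direct computation shows that each $\varphi_\ell$ satisfies the vanishing conditions $\varphi_\ell^{(2n+1)}(s_0) = 0$ and $\varphi_\ell^{(2n)}(s_1) = 0$ for every $n \ge 0$, and has exponential type exactly $(2\ell+1)\pi/(2|s_1-s_0|)$, which is precisely the threshold appearing in the bound on $L$.

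The first step is a P\'olya-type reduction, parallel to \S~\ref{S:GeneralizationPolya}: under \eqref{eq:maingrowthcondition}, the quantities $|f^{(2n+1)}(s_0)|$ and $|f^{(2n)}(s_1)|$ tend to $0$ as $n \to \infty$, forcing their integer values to vanish for all sufficiently large $n$. The statement for even derivatives at $s_1$ is the one already cited; the statement for odd derivatives at $s_0$ follows either by applying the cited result to a primitive of $f$ (after absorbing a harmless shift into the constant in \eqref{eq:maingrowthcondition}) or by reproducing the argument with the odd index. Consequently there exists an integer $N \ge 0$ with $f^{(2n+1)}(s_0) = 0$ and $f^{(2n)}(s_1) = 0$ for all $n \ge N$. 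Setting $g = f^{(2N)}$, one has an entire function with $\tau(g) = \tau(f)$ satisfying $g^{(2n+1)}(s_0) = g^{(2n)}(s_1) = 0$ for every $n \ge 0$.

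The second step is to subtract cosines from $g$ until its exponential type drops below the critical threshold $\pi/(2|s_1-s_0|)$. Let $L$ be the largest integer with $(2L+1)\pi/2 \le |s_1-s_0|\tau(f)$. An indicator-diagram or Phragm\'en--Lindel\"of argument, combined with the fact that the only frequencies compatible with both the vanishing conditions and exponential type at most $\tau(f)$ are $\pm(2\ell+1)\pi/(2(s_1-s_0))$ with $0 \le \ell \le L$, lets one choose $c_0, \dots, c_L$ so that
$$
h(z) = g(z) - \sum_{\ell=0}^{L} c_\ell\, \varphi_\ell^{(2N)}(z)
$$
is entire of exponential type strictly less than $\pi/(2|s_1-s_0|)$ and still satisfies $h^{(2n+1)}(s_0) = h^{(2n)}(s_1) = 0$ for every $n \ge 0$. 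The Whittaker--Schoenberg uniqueness theorem (the analog, for the mixed odd/even boundary data, of the Lidstone interpolation theorem) then gives $h \equiv 0$, so $g = \sum_{\ell=0}^L c_\ell\, \varphi_\ell^{(2N)}$. Integrating $2N$ times produces the desired $f = P + \sum_{\ell=0}^L c_\ell \varphi_\ell$ with $P \in \C[z]$ absorbing the integration constants.

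The main obstacle I expect is the successive ``peeling off'' of cosines in the third step: extracting the top-frequency coefficient of $g$ and verifying that the subtraction strictly lowers the exponential type demands a clean analysis via the Borel transform of $g$ (whose singularities must lie at the points $\pm i(2\ell+1)\pi/(2(s_1-s_0))$) or, equivalently, via Phragm\'en--Lindel\"of estimates on judiciously chosen half-planes combined with the imposed vanishing conditions. A secondary technical burden is the proof of the Whittaker--Schoenberg uniqueness theorem itself, which relies on quantitative growth estimates for the Whittaker--Schoenberg polynomials entirely parallel to those established for Lidstone polynomials in \S~\ref{S:Lidstone}; these estimates would form the technical core of \S~\ref{S:OddEven}.
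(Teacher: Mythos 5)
Your overall architecture is the same as the paper's: a P\'olya-type step converting the integrality hypotheses into the vanishing of $f^{(2n+1)}(s_0)$ and $f^{(2n)}(s_1)$ for all large $n$, followed by the two-point expansion theorem for the mixed odd/even data. Two differences are worth noting. First, the paper removes the finitely many nonzero derivative values not by passing to $g=f^{(2N)}$ but by subtracting the polynomial $P(z)=\sum_{n}\bigl(f^{(2n)}(s_1)\Mtilde_n(z-s_0)+f^{(2n+1)}(s_0)\Mtilde'_{n+1}(z-s_1)\bigr)$ built from the Whittaker polynomials; your device works just as well, since even-order differentiation preserves both the parity pattern of the conditions and the exponential type, and the $2N$ integration constants are absorbed into $P$. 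Second, and more importantly, what you flag as the ``main obstacle'' --- peeling off the cosines via the Borel transform and then invoking a uniqueness theorem below type $\pi/(2|s_1-s_0|)$ --- is exactly Schoenberg's two-point expansion theorem, which the paper does not reprove but quotes (Proposition \ref{Proposition:Schoenberg-cosine}, Schoenberg 1936): after the affine change of variable $\fhat(z)=\ftilde\bigl(s_0+z(s_1-s_0)\bigr)$ it gives directly $\fhat(z)=\sum_{\ell=0}^{L}c_\ell\cos\bigl((2\ell+1)\pi z/2\bigr)$ with $(2L+1)\pi/2\le\tau(\fhat)=|s_1-s_0|\,\tau(f)$. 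Your sketch of that theorem is the standard route and could presumably be completed, but as written it is the one step you have not actually carried out; citing it closes the argument immediately. Two minor corrections: Proposition \ref{Proposition:Polya} already gives $|f^{(n)}(z)|<1$ for all sufficiently large $n$, of either parity, uniformly on $|z|\le\max\{|s_0|,|s_1|\}$, so no primitive trick or separate odd-index argument is needed; and under \eqref{eq:maingrowthcondition} these derivatives need not tend to $0$ (they only become $<1$ in modulus, which is all the integrality argument requires). Note also that the growth estimates for the $\Mtilde_n$ (Lemma \ref{Lemma:upperbound|Milde|}) are used in the paper for the optimality construction of Theorem \ref{Th:TwoPointsOddEvenExistence}, not for Theorem \ref{Th:TwoPointsOddEven} itself.
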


In the case $| s_1-s_0|\le \pi/2$, any transcendental entire function $f$ satisfying $f^{(2n+1)}(s_0)\in\Z$ and $f^{(2n)}( s_1)\in\Z$ for all sufficiently $n$ has exponential type $\ge 1$. Here are examples of such functions of exponential type $1$. Let $a_0\in\Z$ and $a_1\in\Z$ with $(a_0,a_1)\not=(0,0)$. Define
$$
f_{a_0,a_1}(z)=
a_0\frac{\cosh(z- s_1)}{\cosh(s_0- s_1)}
+
a_1\frac{\cosh(z-s_0)}{\cosh(s_1-s_0)}\cdotp
$$
Then $f'_{a_0,a_1} (s_0)=a_0$, $f_{a_0,a_1} ( s_1)=a_1$ and $f_{a_0,a_1}''=f_{a_0,a_1}$, hence $f_{a_0,a_1}^{(2n+1)}(s_0)=a_0$ and $f_{a_0,a_1}^{(2n)}( s_1)=a_1$ for all $n\ge 0$.

In the case $| s_1-s_0|\ge \pi/2$, any transcendental entire function $f$ satisfying $f^{(2n+1)}(s_0)\in\Z$ and $f^{(2n)}( s_1)\in\Z$ for all sufficiently large $n$ has exponential type $\ge \pi/(2| s_1-s_0|)$. 
For $\ell\ge 0$, the function
$$
f_\ell(z)=\cos\left(\frac{(2\ell+1)\pi}{2}\cdot \frac{z-s_0}{ s_1-s_0}
\right)
$$
has exponential type $\frac{(2\ell+1)\pi}{2| s_1-s_0|}$ and satisfies $f_\ell^{(2n+1)}(s_0)=f_\ell^{(2n)}( s_1)=0$ for all $n\ge 0$.
 
\begin{corollary}\label{Corollary:TwoPointsOddEven}
Let $f$ be an entire function satisfying \eqref{eq:maingrowthcondition} for which $f^{(2n+1)}(s_0)\in\Z$ and $f^{(2n)}( s_1)\in\Z$ for all sufficiently large $n$. Then the two sets 
$$
\{n\ge 0\; |\; f^{(2n+1)}(s_0)\not=0\} \quad \text{and}\quad 
\{n\ge 0\; |\; f^{(2n)}( s_1)\not=0\}
$$
are finite.
If the exponential type of $f$ satisfies $\tau(f)<\frac{\pi}{2| s_1-s_0|}$, then $f$ is a polynomial. 
\end{corollary}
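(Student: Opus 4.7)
The plan is to apply Theorem \ref{Th:TwoPointsOddEven} directly and then observe that the cosine terms it produces do not contribute to any of the derivatives that we are controlling. By Theorem \ref{Th:TwoPointsOddEven}, under the hypothesis \eqref{eq:maingrowthcondition} and the integrality assumption on $f^{(2n+1)}(s_0)$ and $f^{(2n)}(s_1)$, we may write
$$
f(z)=P(z)+\sum_{\ell=0}^L c_\ell \cos\left(\frac{(2\ell+1)\pi}{2}\cdot \frac{z-s_0}{s_1-s_0}\right),
$$
where $P\in\C[z]$, $c_0,\ldots,c_L\in\C$ and $(2L+1)\pi/2\le |s_1-s_0|\tau(f)$.

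The second assertion is then immediate: if $\tau(f)<\pi/(2|s_1-s_0|)$, the inequality $(2L+1)\pi/2\le |s_1-s_0|\tau(f)$ would force $2L+1<1$, which is impossible for an integer $L\ge 0$; hence the sum is empty and $f=P$ is a polynomial.

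For the first assertion, set $\beta_\ell=(2\ell+1)\pi/\bigl(2(s_1-s_0)\bigr)$ and $g_\ell(z)=\cos\bigl(\beta_\ell(z-s_0)\bigr)$, so that the $k$-th derivative satisfies $g_\ell^{(k)}(z)=\beta_\ell^{\,k}\cos\bigl(\beta_\ell(z-s_0)+k\pi/2\bigr)$. At $z=s_0$ and $k=2n+1$, the argument is $(2n+1)\pi/2$ and the cosine vanishes, so $g_\ell^{(2n+1)}(s_0)=0$. At $z=s_1$ and $k=2n$, the argument is $(2\ell+1)\pi/2+n\pi$, again an odd multiple of $\pi/2$, so $g_\ell^{(2n)}(s_1)=0$. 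Consequently the trigonometric sum contributes nothing, and we obtain $f^{(2n+1)}(s_0)=P^{(2n+1)}(s_0)$ and $f^{(2n)}(s_1)=P^{(2n)}(s_1)$ for all $n\ge 0$. Since $P$ is a polynomial, both of these quantities vanish for all $n$ larger than $\deg P$, and the two sets in question are finite.

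There is no real obstacle once Theorem \ref{Th:TwoPointsOddEven} is available; the only point to check with care is the vanishing of the correct derivatives of the building blocks $\cos\bigl(\beta_\ell(z-s_0)\bigr)$ at both $s_0$ and $s_1$, which is precisely why these functions were chosen in the theorem.
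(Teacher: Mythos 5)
Your proof is correct, and both halves do follow from Theorem \ref{Th:TwoPointsOddEven} as you argue; the treatment of the second assertion (the inequality $(2L+1)\pi/2\le |s_1-s_0|\tau(f)$ forcing the trigonometric sum to be empty) is exactly the paper's intended use of the theorem. Where you diverge is in the first assertion. The paper obtains the finiteness of the two sets directly from Corollary \ref{Corollary:Polya}: the growth hypothesis \eqref{eq:maingrowthcondition} alone forces $|f^{(n)}(z)|<1$ for all large $n$ on the disc $|z|\le\max\{|s_0|,|s_1|\}$, so the integer values $f^{(2n+1)}(s_0)$ and $f^{(2n)}(s_1)$ must eventually vanish; indeed this is the very first step inside the paper's proof of Theorem \ref{Th:TwoPointsOddEven}, so in the paper's logic the finiteness statement is available before (and is used to obtain) the decomposition. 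You instead take the theorem as a black box and recover finiteness a posteriori from the decomposition, using that each building block $\cos\bigl(\tfrac{(2\ell+1)\pi}{2}\cdot\tfrac{z-s_0}{s_1-s_0}\bigr)$ has all odd derivatives vanishing at $s_0$ and all even derivatives vanishing at $s_1$ (a fact the paper also records in the introduction, and which your computation verifies correctly, also for complex $\beta_\ell$), so that these derivatives of $f$ equal those of the polynomial $P$. Both routes are sound; the paper's is slightly more economical in that the finiteness claim needs only the P\'olya-type estimate and not the full two-point expansion machinery, while yours has the virtue of being self-contained given only the statement of Theorem \ref{Th:TwoPointsOddEven}.
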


The assumption \eqref{eq:maingrowthcondition} in Corollary \ref{Corollary:TwoPointsOddEven} is essentially optimal: 

\begin{theorem}\label{Th:TwoPointsOddEvenExistence}
Let $s_0, s_1$ be two distinct complex numbers satisfying
\begin{equation}\label{equation:majoration|s0-s1|OddEven}
|s_1-s_0|<\log(2+\sqrt 3)=1.316\, 957\,8\cdots . 
\end{equation}
There exist a constant $\gamma'$ and an uncountable set of transcendental entire functions $f$ satisfying $f^{(2n+1)}(s_0)=0$ and $f^{(2n)}( s_1)\in\Z$ for all $n\ge 0$, such that the set of $n\ge 0$ with $f^{(2n)}(s_1)\not=0$ is infinite and such that 
\begin{equation}\label{eq:maingrowthconditiongammaprime}
\limsup_{r\to\infty}\rme^{-r}\sqrt r |f|_r\le \gamma'.
\end{equation}
\end{theorem}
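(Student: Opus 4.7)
The plan is to mirror the construction that (presumably) underlies Theorem~\ref{Th:TwoPointsEvenExistence}, replacing the Lidstone polynomials by the Whittaker--Schoenberg polynomials $\Mtilde_n$ introduced in \S\ref{S:OddEven}: recall that $\Mtilde_n$ is the polynomial uniquely determined by the interpolation conditions $\Mtilde_n^{(2k+1)}(s_0)=0$ and $\Mtilde_n^{(2k)}(s_1)=\delta_{n,k}$ for all $k\ge 0$. Any series
$$
f(z)=\sum_{n\ge 0} a_n \Mtilde_n(z), \qquad a_n\in\Z,
$$
which converges locally uniformly to an entire function automatically satisfies $f^{(2n+1)}(s_0)=0$ and $f^{(2n)}(s_1)=a_n\in\Z$ for every $n\ge 0$. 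The problem therefore reduces to exhibiting uncountably many integer sequences $(a_n)$, each having infinitely many nonzero terms, for which the corresponding series defines an entire function satisfying the growth bound~\eqref{eq:maingrowthconditiongammaprime}.

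The crucial ingredient is a sharp upper bound for $|\Mtilde_n|_r$ on circles of large radius. Starting from a contour-integral representation of $\Mtilde_n$ analogous to the one used for Lidstone polynomials in \S\ref{S:Lidstone}, and exploiting the $\cosh$-type generating kernel that is natural for the ``odd at $s_0$, even at $s_1$'' boundary conditions, I would establish an estimate of the form
$$
|\Mtilde_n|_r \;\le\; C(s_0,s_1)\,\frac{\rme^r}{\sqrt r}\,\left(\frac{\cosh|s_1-s_0|}{2}\right)^{\!n}\bigl(1+o(1)\bigr)
\qquad (r\to\infty).
$$
The factor $\tfrac12$ in the geometric ratio, together with the identity $\cosh\log(2+\sqrt 3)=2$, is precisely what produces the threshold~\eqref{equation:majoration|s0-s1|OddEven}: under this hypothesis, the ratio $\tfrac12\cosh|s_1-s_0|$ is strictly less than $1$, so the tail of the series is geometrically summable.

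Granted such a bound, I would take any $(a_n)_{n\ge 0}\in\{0,1\}^\N$ with infinitely many $a_n=1$ and set $f_a(z)=\sum_{n\ge 0} a_n\Mtilde_n(z)$. Summing the estimate termwise yields $\limsup_{r\to\infty} \rme^{-r}\sqrt r\,|f_a|_r \le \gamma'$ for some constant $\gamma'$ independent of $a$. Distinct sequences produce distinct functions because $f_a^{(2n)}(s_1)=a_n$, so this construction yields an uncountable family. Each $f_a$ is transcendental, since any polynomial $P$ of degree $d$ satisfies $P^{(2n)}(s_1)=0$ for $n>d/2$, whereas $\{n:f_a^{(2n)}(s_1)\neq 0\}$ is infinite by construction.

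The main obstacle is proving the sharp growth estimate on $|\Mtilde_n|_r$ with the explicit geometric ratio $\tfrac12\cosh|s_1-s_0|$. This will require carefully choosing the contour in an integral representation of $\Mtilde_n$ and performing a saddle-point-type asymptotic analysis on $|z|=r$ as $r\to\infty$, in close analogy with the Lidstone case treated in \S\ref{S:Lidstone} but adapted to the $\cosh$ kernel that governs the Whittaker--Schoenberg family.
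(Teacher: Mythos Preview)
Your overall plan---build $f$ as a series $\sum a_n\Mtilde_n(z-s_0)$ with integer $a_n$---is exactly the paper's, and you have correctly spotted that the threshold \eqref{equation:majoration|s0-s1|OddEven} is the condition $\cosh|s_1-s_0|<2$. But the key estimate you propose,
\[
|\Mtilde_n|_r \;\le\; C\,\frac{\rme^r}{\sqrt r}\left(\frac{\cosh|s_1-s_0|}{2}\right)^{n}\bigl(1+o(1)\bigr),
\]
is false, and with it the claim that \emph{any} $(a_n)\in\{0,1\}^{\N}$ works. Indeed, take $a_n\equiv 1$: from the generating identity \eqref{Equation:coshsech} (after rescaling) one has $\sum_{n\ge 0}\Mtilde_n(z-s_0)=\cosh(z-s_0)/\cosh(s_1-s_0)$, so for this choice $|f|_r\sim \rme^{r}/(2\cosh(s_1-s_0))$ and $\rme^{-r}\sqrt r\,|f|_r\to\infty$, contradicting \eqref{eq:maingrowthconditiongammaprime}. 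More generally, for each large $r$ there are on the order of $\sqrt r$ indices $n$ near $r/2$ with $|\Mtilde_n|_r$ of size $\rme^r/\sqrt r$; no uniform geometric decay in $n$ with that prefactor is possible.

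What the paper actually proves (Lemma~\ref{Lemma:upperbound|Milde|}) are three separate estimates: (i) a crude polynomial bound; (ii) a \emph{uniform-in-$n$} bound $|\Mtilde_n|_r\le \gamma'_2\,\rme^{r+1/(4r)}/\sqrt{2\pi r}$ with $\gamma'_2=(2-\cosh|s_1-s_0|)^{-1}$, finite precisely under \eqref{equation:majoration|s0-s1|OddEven}---this is where the threshold enters, not as a geometric ratio; and (iii) a bound with geometric decay in $n$ but exponent $\pi r/|s_1-s_0|>r$. None of these alone suffices for an arbitrary $\{0,1\}$ sequence. The paper therefore takes a \emph{lacunary} sequence $(e_n)$, nonzero only at indices $N_k=(\gamma'_4)^{2^k-1}$, and for each $r$ splits the sum: the terms with $n<N_k$ are handled by (i), the single term $n=N_k$ by (ii), and the tail $n>N_k$ by (iii) via Corollary~\ref{Cor:uperboundMtilde}. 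The lacunarity is not cosmetic; it is what prevents the $\sqrt r$ accumulation that kills your argument.
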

\subsection{Sequence of derivatives}\label{S:Sequences}

We propose some generalisations of Corollary \ref{Corollary:TwoPointsOddEven}, where we assume that for each sufficiently large integer $n$, one at least of the two numbers $f^{(n)}(s_0)$, $f^{(n)}(s_1)$ is in $\Z$. 

We start with the case of a periodic sequence. Let $m\ge 2$ be a positive integer. Let $\sigma_0,\sigma_1,\dots,\sigma_{m-1}$ be complex numbers, not necessarily distinct: we will be interested with the case where they all belong to a set with two elements, but the next result is not restricted to two points. Set $\zeta=\rme^{2i\pi/m}$ and denote by $\tau$ the smallest modulus of a zero of the function $\Delta(t)$, where $\Delta(t)$ is the determinant of the $m\times m$ matrix
$$
\Bigl(
\zeta^{k\ell}\rme^{\zeta^kt\sigma_\ell} 
\Bigr)_{0\le k,\ell\le m-1}=
\begin{pmatrix}
\rme^{t\sigma_0} & \rme^{t\sigma_1}& \rme^{t\sigma_2} & \cdots & \rme^{t\sigma_{m-1}} 
\\
\rme^{\zeta t \sigma_0} &\zeta\rme^{\zeta t \sigma_1} &\zeta^2 \rme^{\zeta t \sigma_2} & \cdots & \zeta^{m-1}\rme^{\zeta t\sigma_{m-1}} 
\\
\rme^{\zeta^2 t \sigma_0} &\zeta^2\rme^{\zeta^2 t \sigma_1} &\zeta^4 \rme^{\zeta^2 t \sigma_2} & \cdots & \zeta^{2(m-1)}\rme^{\zeta^2 t\sigma_{m-1}} 
\\
 \vdots& \vdots&\vdots&\ddots&\vdots 
 \\
\rme^{\zeta^{m-1} t\sigma_0} &\zeta^{m-1}\rme^{\zeta^{m-1}t\sigma_1} &\zeta^{2(m-1)} \rme^{\zeta^{m-1}t\sigma_2} & \cdots & \zeta^{(m-1)^2}\rme^{\zeta^{m-1} t\sigma_{m-1}} 
\end{pmatrix}.
$$

\begin{theorem}\label{Th:GontcharoffMacintyre}
Let $f$ be a transcendental entire function of exponential type $< \tau$ satisfying \eqref{eq:maingrowthcondition}. 
Assume that for each sufficiently large $n$, we have 
$$
f^{(mn+j)}(\sigma_j)\in\Z \text{ for $j=0,1,\dots,m-1$}.
$$
Then $f$ is a polynomial. 
 \end{theorem}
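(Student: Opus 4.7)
\textbf{Proof plan for Theorem \ref{Th:GontcharoffMacintyre}.} I would follow the two-step strategy that structures this paper: first a P\'olya-type reduction from integer-valued to zero-valued derivatives using \eqref{eq:maingrowthcondition}, and then an application of the Gontcharoff--Macintyre uniqueness theorem for the interpolation problem with periodic nodes, which is precisely what the number $\tau$ (defined via the zeros of $\Delta(t)$) was introduced to permit.

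\textbf{Step 1 (P\'olya reduction).} For each $\sigma_j$, which in the present situation lies in the two-point set $\{s_0,s_1\}$ so that $|\sigma_j|\le\max\{|s_0|,|s_1|\}$, I would start from Cauchy's inequality
\[
|f^{(n)}(\sigma_j)|\le \frac{n!}{r^n}\, |f|_{r+|\sigma_j|}.
\]
Taking $r\approx n$, applying Stirling's formula $n!\sim\sqrt{2\pi n}\,(n/\rme)^n$, and inserting the bound $|f|_\rho\le c\,\rme^{\rho}/\sqrt{\rho}$ coming from the \emph{strict} inequality in \eqref{eq:maingrowthcondition} with some constant $c<\frac{1}{\sqrt{2\pi}}\rme^{-\max\{|s_0|,|s_1|\}}$, I obtain
\[
\limsup_{n\to\infty}|f^{(n)}(\sigma_j)|\le c\sqrt{2\pi}\,\rme^{|\sigma_j|}<\rme^{|\sigma_j|-\max\{|s_0|,|s_1|\}}\le 1.
\]
Since by hypothesis $f^{(mn+j)}(\sigma_j)\in\Z$ for all $n$ sufficiently large, this integer must vanish for all $n\ge N$ and all $j=0,1,\dots,m-1$, for some $N$.

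\textbf{Step 2 (periodic interpolation).} Set $g=f^{(mN)}$. Differentiation preserves the exponential type, since by \eqref{eq:type}
\[
\tau(g)=\limsup_{n\to\infty}|f^{(n+mN)}(z_0)|^{1/n}=\tau(f)<\tau.
\]
By the choice of $N$,
\[
g^{(mn+j)}(\sigma_j)=f^{(m(n+N)+j)}(\sigma_j)=0\quad\text{for all }n\ge 0,\ j=0,1,\dots,m-1.
\]
At this point I would invoke the Gontcharoff--Macintyre uniqueness theorem: an entire function of exponential type $<\tau$ subject to this periodic vanishing pattern is identically zero. Heuristically, $\Delta(t)=0$ is exactly the compatibility condition for the interpolation problem to admit a nonzero solution built from the elementary functions $\rme^{\zeta^k t z}$ (all annihilated by $(\rmd/\rmd z)^m - t^m$), so strict inequality $\tau(g)<\tau$ forces $g\equiv 0$. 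Consequently $f^{(mN)}\equiv 0$ and $f$ is a polynomial of degree $<mN$.

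\textbf{Main obstacle.} The substantive content lies in the Gontcharoff--Macintyre uniqueness theorem itself, which I would cite as a black box rather than reprove: its proof rests on a periodic analog of Gontcharoff's interpolation expansion and a delicate estimate of the associated basis polynomials in terms of the zeros of $\Delta(t)$. On the elementary side, the point that requires real care is the \emph{strict} inequality in Step 1 when $|\sigma_j|=\max\{|s_0|,|s_1|\}$: it is here that the constant $1/\sqrt{2\pi}$ and the factor $\sqrt{r}$ built into \eqref{eq:maingrowthcondition} must be exactly matched against the Stirling constant, and only the strict inequality in \eqref{eq:maingrowthcondition} ensures $\limsup|f^{(n)}(\sigma_j)|<1$ rather than merely $\le 1$.
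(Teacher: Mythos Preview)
Your proposal is correct and follows essentially the same route as the paper: first apply the growth hypothesis \eqref{eq:maingrowthcondition} via Proposition~\ref{Proposition:Polya}/Corollary~\ref{Corollary:Polya} to force $f^{(mn+j)}(\sigma_j)=0$ for large $n$, then invoke the Gontcharoff--Macintyre result (stated in the paper as Proposition~\ref{Proposition:GontcharoffMacintyre}). The only cosmetic difference is that the paper cites Proposition~\ref{Proposition:GontcharoffMacintyre} in the form ``vanishing for all sufficiently large $n$ implies $f$ is a polynomial'', so the shift $g=f^{(mN)}$ is absorbed into that black box rather than made explicit.
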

 
 This result is optimal: 
 
 \begin{proposition}\label{Prop:GontcharoffMacintyre}
 {\rm (a)} Let $\alpha$ be a zero of $\Delta(t)$. There exists $c_0,c_1,\dots,c_{m-1}$ in $\C$, not all zero, such that the function 
 $$
 f(z)=c_0\rme^{\alpha z}+c_1\rme^{\zeta\alpha z}+\cdots+c_{m-1}\rme^{\zeta^{m-1}\alpha z}
 $$
 satisfies 
 $$
f^{(mn+j)}(\sigma_j) =0 \text{ for $j=0,1,\dots,m-1$ and $n\ge 0$}.
$$
{\rm (b)} Assume $\tau>1$. Given $a_0,a_1,\dots,a_{m-1}$ in $\C$, there exists a unique entire function of exponential type $\le 1$ satisfying 
 $$
f^{(mn+j)}(\sigma_j) = a_j \text{ for $j=0,1,\dots,m-1$ and $n\ge 0$}.
$$
 \end{proposition}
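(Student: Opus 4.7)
My plan handles both parts with the same ansatz
$f(z)=\sum_{k=0}^{m-1} c_k\rme^{\zeta^k t z}$,
suggested by the form of $\Delta(t)$. Differentiating term--wise and using $\zeta^{km}=1$ yields, for $N\ge 0$ and $j\in\{0,\dots,m-1\}$,
\[
f^{(mN+j)}(\sigma_j)=t^{mN+j}\sum_{k=0}^{m-1} c_k\zeta^{kj}\rme^{\zeta^k t \sigma_j}.
\]
So up to the scalar $t^{mN+j}$ the right--hand side is independent of $N$, and the infinite family of interpolation conditions collapses to the $m$ linear equations in $(c_0,\dots,c_{m-1})$ whose coefficient matrix is the transpose of the one displayed just before Theorem~\ref{Th:GontcharoffMacintyre}, evaluated at that value of~$t$.

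For part~(a) I would take $t=\alpha$. Because $\Delta(0)$ is a Vandermonde determinant on the distinct $m$--th roots of unity, and therefore non--zero, we have $\alpha\neq 0$, so the exponentials $\rme^{\zeta^k\alpha z}$ are linearly independent. The condition $\Delta(\alpha)=0$ then provides a non--trivial solution $(c_0,\dots,c_{m-1})$, and the associated $f$ is not identically zero while meeting every vanishing requirement. For the existence half of part~(b) I take $t=1$: the hypothesis $\tau>1$ gives $\Delta(1)\neq 0$, so the system
\[
\sum_{k=0}^{m-1} c_k\zeta^{kj}\rme^{\zeta^k\sigma_j}=a_j, \qquad j=0,\dots,m-1,
\]
admits a unique solution; the resulting $f$ has exponential type $\le 1$ and satisfies $f^{(m)}=f$, whence $f^{(mn+j)}(\sigma_j)=f^{(j)}(\sigma_j)=a_j$ for all $n\ge 0$.

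The step that calls for external input is uniqueness in~(b). If $f_1,f_2$ are two candidates, then $P=f_1-f_2$ has exponential type $\le 1<\tau$ and satisfies the homogeneous system $P^{(mn+j)}(\sigma_j)=0$ for every $n\ge 0$ and $j$. I would conclude $P\equiv 0$ by invoking the classical Gontcharoff--Macintyre uniqueness theorem, whose homogeneous form requires only that the exponential type be strictly less than~$\tau$; the growth condition~\eqref{eq:maingrowthcondition} used in Theorem~\ref{Th:GontcharoffMacintyre} serves only to upgrade integer--valued data to vanishing data, a step that is unnecessary here since the data are already zero. This is the main obstacle in the sense of being the one point where the argument is not a bare matrix computation.
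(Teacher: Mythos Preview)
Your argument is correct and matches the paper's proof essentially line for line: the same exponential ansatz, the same reduction via $f^{(m)}=t^{m}f$ to a single $m\times m$ linear system with matrix (the transpose of) $\bigl(\zeta^{k\ell}\rme^{\zeta^kt\sigma_\ell}\bigr)$, and the same appeal to Proposition~\ref{Proposition:GontcharoffMacintyre} for uniqueness in~(b). Your observation that $\Delta(0)$ is a Vandermonde determinant, hence $\alpha\neq 0$, is a useful addition that the paper leaves implicit but needs for the remark that the function in~(a) is transcendental.
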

 
 The function given by (a) is a transcendental entire function of exponential type $|\alpha|$. If $(a_0,a_1,\dots,a_{m-1})\not=(0,0,\dots,0)$, the function $f$ given by (b) is a transcendental entire function of exponential type $1$. Notice that $f$ does not satisfy the assumption \eqref{eq:maingrowthcondition} of Theorem \ref{Th:GontcharoffMacintyre}. 
 
 Here is a corollary of Theorem \ref{Th:GontcharoffMacintyre}. We fix again an integer $m\ge 2$ and we denote by $\tau_m$ the smallest modulus of a zero of the function 
$$
1+\frac{t^m}{m!}+\frac{t^{2m}}{(2m)!}+\cdots+\frac{t^{nm}}{(nm)!}+\cdots
$$
Since $\tau_2=\pi/2$, Corollary \ref{Corollary:TwoPointsOddEven} is the case $m=2$ of the next result. 
 
\begin{corollary}\label{Cor:GontcharoffMacintyre}
Let $s_0$ and $s_1$ be two complex numbers. 
Let $f$ be a transcendental entire functions satisfying \eqref{eq:maingrowthcondition}. 
Assume that the exponential type $\tau(f)$ of $f$ satisfies 
$$
\tau(f)< \frac {\tau_m} {| s_1-s_0|}\cdotp
$$
Assume further that for each sufficiently large $n$, we have 
$$
f^{(n)}(s_0)\in\Z \text{ for $n\not \equiv 0 \bmod m$ }
\text{ and }\;
f^{(n)}(s_1)\in\Z \text{ for $n \equiv 0 \bmod m$}. 
$$ 
Then $f$ is a polynomial. 
 \end{corollary}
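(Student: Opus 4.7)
My plan is to derive the corollary from Theorem \ref{Th:GontcharoffMacintyre} by specialising the $m$-tuple to $(\sigma_0,\sigma_1,\dots,\sigma_{m-1})=(s_1,s_0,\dots,s_0)$. Writing any integer $n$ as $n=mn'+j$ with $0\le j\le m-1$, the hypothesis $f^{(mn'+j)}(\sigma_j)\in\Z$ of the theorem becomes $f^{(n)}(s_1)\in\Z$ when $j=0$ (i.e.\ $n\equiv 0\bmod m$) and $f^{(n)}(s_0)\in\Z$ when $1\le j\le m-1$ (i.e.\ $n\not\equiv 0\bmod m$), so the assumptions of the corollary transfer verbatim to those of the theorem. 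It remains to identify the constant $\tau$ of Theorem \ref{Th:GontcharoffMacintyre} with $\tau_m/|s_1-s_0|$.

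To compute $\Delta(t)$ for this configuration, I will factor $\rme^{\zeta^k t s_0}$ out of row $k$ of the defining $m\times m$ matrix; the product of these factors is $\rme^{ts_0\sum_k\zeta^k}=1$ because $\sum_{k=0}^{m-1}\zeta^k=0$ for $m\ge 2$. Setting $u=t(s_1-s_0)$, what remains is the determinant of the matrix whose first column is $(\rme^{\zeta^k u})_{0\le k\le m-1}$ and whose columns $\ell=1,\dots,m-1$ are the last $m-1$ columns of the Vandermonde matrix $V=(\zeta^{k\ell})_{0\le k,\ell\le m-1}$. Expanding along the first column gives $\sum_k(-1)^k\rme^{\zeta^k u}M_k$, and a cofactor computation using the standard identity $\prod_{\ell=1}^{m-1}(1-\zeta^\ell)=m$ together with the relation $\zeta^{m(m-1)/2}=(-1)^{m-1}$ should yield $M_k=(-1)^k\det V/m$. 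The $(-1)^k$ signs then cancel, and invoking the orthogonality relation $\sum_{k=0}^{m-1}\rme^{\zeta^k u}=m\sum_{n\ge 0}u^{mn}/(mn)!$ one obtains
\[
\Delta(t)=\det V\cdot E_m\bigl(t(s_1-s_0)\bigr),\qquad E_m(u)=\sum_{n\ge 0}\frac{u^{mn}}{(mn)!}.
\]
Since $\det V\ne 0$, the smallest modulus zero of $\Delta$ is precisely $\tau_m/|s_1-s_0|$, so in Theorem \ref{Th:GontcharoffMacintyre} we indeed have $\tau=\tau_m/|s_1-s_0|$.

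Combining this with the growth assumption $\tau(f)<\tau_m/|s_1-s_0|$ yields $\tau(f)<\tau$, and Theorem \ref{Th:GontcharoffMacintyre} then concludes that $f$ is a polynomial. The main obstacle I foresee is the bookkeeping in the Vandermonde cofactor step: tracking the powers of $\zeta$ extracted from each remaining row of a minor together with the signs arising from deletion of row $k$, so that $M_k$ collapses to exactly $(-1)^k\det V/m$ rather than to some other root of unity. Once this identification of $\tau$ is in hand, the remainder of the argument is a formal invocation of the theorem.
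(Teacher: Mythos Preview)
Your proposal is correct and follows the same route as the paper: specialise $(\sigma_0,\sigma_1,\dots,\sigma_{m-1})=(s_1,s_0,\dots,s_0)$, identify the smallest zero of $\Delta(t)$ as $\tau_m/|s_1-s_0|$, and invoke Theorem~\ref{Th:GontcharoffMacintyre}. The only difference is in evaluating $\Delta$: the paper works in the normalised case $\sigma_0=1$, $\sigma_1=\cdots=\sigma_{m-1}=0$ and observes that $\Delta(t)$ is invariant under $t\mapsto\zeta t$ (forcing the coefficients of the $\rme^{\zeta^k t}$ in the first-column expansion to coincide), whereas you expand directly---and your bookkeeping worry is unfounded, since the adjugate formula gives the $(k,0)$ cofactor of $V$ as $\det V\cdot(V^{-1})_{0,k}$, and the $0$-th row of $V^{-1}=\frac1m(\zeta^{-k\ell})$ is identically $1/m$, so $(-1)^kM_k=\det V/m$ for every $k$ with no sign-chasing needed.
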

 
 The case $s_0=s_1=0$ is nothing else than Proposition \ref{Prop:Polya} below due to \cite{zbMATH02601316}. 
 
Corollary \ref{Cor:GontcharoffMacintyre} is sharp: from part (a) of Proposition \ref{Prop:GontcharoffMacintyre} it follows that there exists a transcendental entire function $f$ of type $\tau_m/|s_1-s_0|$ satisfying 
$$
f^{(n)}(s_0) =0 \text{ for $n\equiv 0 \bmod m$ }
\text{ and }\;
f^{(n)}(s_1)=0 \text{ for $n\not \equiv 0 \bmod m$}. 
$$ 
Also, from part (b) of Proposition \ref{Prop:GontcharoffMacintyre} it follows that if $\tau_m>|s_1-s_0|$, given $a_0,a_1,\dots, a_{m-1}$ in $\C$, not all of which are zero, there exists a unique entire function $f$ of exponential type $1$ satisfying, for all $n\ge 0$, 
$$
f^{(n)}(s_0)=a_j \text{ for $n \equiv j \bmod m$ and $1\le j\le m-1$}
\text{ and }\;
f^{(n)}(s_1)=a_0 \text{ for $n \equiv 0 \bmod m$}. 
$$ 
This function is transcendental of exponential type $1$, unless $a_0=a_1=\cdots=a_{m-1}=0$ in which case it is $0$.

The next and last result deals with a situation more general than the case of two points in Theorem \ref{Th:GontcharoffMacintyre}, since no periodicity is assumed, and we assume only that one at least of the three numbers $ f^{(n)}(s_0)$, $f^{(n)}(s_1)$, $f^{(n)}(s_0) f^{(n)}(s_1) $ is in $\Z$. The assumption on the type in Theorem \ref{Th:2PointsAlln} may not be optimal. 
 
\begin{theorem}\label{Th:2PointsAlln}
Let $s_0, s_1$ be two distinct complex numbers. 
Let $f$ is an entire function of exponential type $\tau(f)$ satisfying \eqref{eq:maingrowthcondition}. Assume 
$$
\tau(f)< \frac 1 {| s_1-s_0|}\cdotp
$$
Assume that, for all sufficiently large $n$, one at least of the three numbers
$$
 f^{(n)}(s_0), f^{(n)}(s_1), f^{(n)}(s_0) f^{(n)}(s_1) 
$$
 is in $\Z$. 
Then $f$ is a polynomial. 
\end{theorem}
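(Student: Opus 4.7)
The plan is to reduce the hypothesis on $f$ to the much stronger statement that $f^{(n)}(s_0)f^{(n)}(s_1)=0$ for every sufficiently large $n$, and then to invoke a Whittaker--Gontcharoff vanishing theorem on a high-order derivative of $f$.

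The first step is to extract from the growth assumption \eqref{eq:maingrowthcondition} the bound
$$
\limsup_{n\to\infty}|f^{(n)}(s_j)|\;<\;\rme^{|s_j|-\max\{|s_0|,|s_1|\}}\le 1,\qquad j=0,1.
$$
This is obtained by writing Cauchy's inequality for $f^{(n)}(s_j)$ on a circle of radius $r$, optimizing at $r=n$, and applying Stirling's formula; it is the generalized P\'olya estimate of \S~\ref{S:GeneralizationPolya}. Consequently $|f^{(n)}(s_0)|<1$, $|f^{(n)}(s_1)|<1$, and hence also $|f^{(n)}(s_0)f^{(n)}(s_1)|<1$, for all $n$ large enough. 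Since a rational integer of modulus $<1$ must be zero, the hypothesis forces whichever of the three numbers $f^{(n)}(s_0)$, $f^{(n)}(s_1)$, $f^{(n)}(s_0)f^{(n)}(s_1)$ happens to lie in $\Z$ to vanish. In each of the three alternatives this yields $f^{(n)}(s_0)f^{(n)}(s_1)=0$ for all $n\ge N_0$, so that one can pick $\sigma_n\in\{s_0,s_1\}$ with $f^{(n)}(\sigma_n)=0$ for every $n\ge N_0$.

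I would then apply a Whittaker--Gontcharoff vanishing theorem to $g:=f^{(N_0)}$. This function is entire, has the same exponential type $\tau(g)=\tau(f)<1/|s_1-s_0|$, and satisfies $g^{(n)}(\sigma_{n+N_0})=0$ for every $n\ge 0$ with interpolation nodes in $\{s_0,s_1\}$. After the translation $z\mapsto z+(s_0+s_1)/2$ these nodes lie in the closed disk of radius $R=|s_1-s_0|/2$, and Whittaker's 1933 theorem states that an entire function of exponential type $<W/R$ whose $n$-th derivative vanishes at some point of that disk for every $n\ge 0$ must be identically zero, where $W$ denotes Whittaker's constant. Since $2W>1$, the hypothesis $\tau(f)<1/|s_1-s_0|$ implies $\tau(g)<W/R$; hence $g\equiv 0$ and $f$ is a polynomial of degree $<N_0$.

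The genuinely new ingredient over the ``one of $f^{(n)}(s_0), f^{(n)}(s_1)$ is an integer'' scenario already treated via Gontcharoff--Whittaker is the third ``product'' alternative, and this is absorbed for free once one has $|f^{(n)}(s_j)|<1$, because then $|f^{(n)}(s_0)f^{(n)}(s_1)|<1$ excludes non-zero integer values of the product. I expect the main technical point to be invoking the correct quantitative form of Whittaker's theorem and verifying that the constant $1$ in the hypothesis suffices; the author's warning that the assumption on $\tau(f)$ ``may not be optimal'' is consistent with the fact that the optimal constant coming from Whittaker's theorem is $2W\approx 1.475$ rather than $1$.
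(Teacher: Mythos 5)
Your argument is correct, and its first two steps coincide with the paper's: from \eqref{eq:maingrowthcondition} and the Cauchy--Stirling estimate of \S~\ref{S:GeneralizationPolya} (Proposition \ref{Proposition:Polya}, Corollary \ref{Corollary:Polya} with $A=\max\{|s_0|,|s_1|\}$) one gets $|f^{(n)}(s_0)|<1$ and $|f^{(n)}(s_1)|<1$ for all large $n$, so whichever of the three numbers lies in $\Z$ must vanish and hence $f^{(n)}(s_0)f^{(n)}(s_1)=0$ eventually. Where you diverge is the final vanishing theorem. The paper applies Whittaker's two-point result (Proposition \ref{Proposition:Whittaker}, the Corollary of Theorem~7 of the 1933 paper) to $\fhat(z)=f\bigl(s_0+z(s_1-s_0)\bigr)$, which has type $|s_1-s_0|\,\tau(f)<1$; that statement is tailored to the product condition, tolerates ``for all sufficiently large $n$'' directly (no passage to $f^{(N_0)}$ needed), and requires no knowledge of the Whittaker constant. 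You instead pass to $g=f^{(N_0)}$ and invoke the Whittaker-constant theorem for nodes in the disk of radius $R=|s_1-s_0|/2$ about the midpoint, which needs $\tau(g)<W/R=2W/|s_1-s_0|$ together with the inequality $2W>1$. This is sound, and, as you note, it would even permit relaxing the hypothesis to $\tau(f)<2W/|s_1-s_0|$, in line with the paper's remark that its constant may not be optimal. Two caveats on attribution rather than substance: the disk statement involving the constant $W$ is not in Whittaker's 1933 paper (whose relevant result is precisely the two-point corollary the paper uses), and the needed inequality $W>1/2$ is not due to Whittaker either; you must cite later work establishing a lower bound for the Whittaker constant (the known bounds, above $0.72$, are far more than enough). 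With such a reference supplied, your proof is complete.
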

 
\section{On a result of P\'olya}\label{S:GeneralizationPolya}

Recall that a Hurwitz function is an entire function satisfying $f^{(n)}(0)\in\Z$ for all $n\ge 0$. 
Here is one of the earliest results on Hurwitz functions 
\cite{zbMATH02601316}. 

\begin{proposition}\label{Prop:Polya}
A transcendental Hurwitz function $f$ satisfies
$$
\limsup_{r\to\infty}\rme^{-r}\sqrt r |f|_r\ge \frac{1}{\sqrt{2\pi}}\cdotp
$$
 \end{proposition}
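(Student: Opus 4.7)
The plan is to argue by contradiction. Suppose that
$$
\limsup_{r\to\infty}\rme^{-r}\sqrt r\,|f|_r < \frac{1}{\sqrt{2\pi}},
$$
so that there exist a constant $c<1/\sqrt{2\pi}$ and a real number $r_0>0$ such that
$$
|f|_r \le c\,\frac{\rme^r}{\sqrt r}\quad\text{for all }r\ge r_0.
$$
I would combine this growth estimate with Cauchy's inequality \eqref{Equation:CauchyInequality},
$|f^{(n)}(0)|\le n!\,r^{-n}|f|_r$, applied to each $n\ge 0$ and each $r>r_0$, to obtain
$$
|f^{(n)}(0)| \le c\,\frac{n!\,\rme^r}{r^{n+1/2}}\cdotp
$$

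Next I would minimize the right-hand side in $r$. The exact minimum of $r\mapsto \rme^r/r^{n+1/2}$ is attained at $r=n+\tfrac12$, but it is enough to take $r=n$ (valid for $n\ge r_0$), giving
$$
|f^{(n)}(0)| \le c\,\frac{n!\,\rme^n}{n^{n+1/2}}\cdotp
$$
Invoking Stirling's formula \eqref{Equation:Stirling} in the form $n!=\sqrt{2\pi n}\,(n/\rme)^n(1+o(1))$, the right-hand side rewrites as
$$
c\sqrt{2\pi}\bigl(1+o(1)\bigr),\qquad n\to\infty.
$$
Since $c\sqrt{2\pi}<1$ by the choice of $c$, this yields $|f^{(n)}(0)|<1$ for all sufficiently large $n$. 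The integrality hypothesis $f^{(n)}(0)\in\Z$ then forces $f^{(n)}(0)=0$ for all $n$ large enough, so the Taylor series of $f$ at the origin terminates and $f$ is a polynomial, contradicting the assumption that $f$ is transcendental.

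The argument is short and there is no genuine technical obstacle. The only point to be handled with care is the matching of constants: Cauchy's inequality combined with Stirling's asymptotics produces exactly the factor $\sqrt{2\pi}$, which is precisely the reciprocal of the constant $1/\sqrt{2\pi}$ appearing in the statement. This matching is what makes the proposition sharp and also what motivates the peculiar normalization $\rme^{-r}\sqrt r\,|f|_r$ used throughout the main growth condition \eqref{eq:maingrowthcondition}.
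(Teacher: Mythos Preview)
Your argument is correct and is essentially the same as the paper's: the paper does not prove Proposition~\ref{Prop:Polya} separately but establishes the more general Proposition~\ref{Proposition:Polya} by exactly the Cauchy-plus-Stirling computation you carry out (choosing $r=n-A$, which reduces to your choice $r=n$ when $A=0$), and then deduces that $f^{(n)}(0)=0$ for large $n$.
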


The noncountable set of entire functions 
\begin{equation}\label{Eq:PolyaExample}
f(z)=\sum_{n=0}^\infty e_n \frac{z^{2^n}}{2^n!}
\quad\text{ for which }\quad
\limsup_{r\to\infty}\rme^{-r}\sqrt r |f|_r = \frac{1}{\sqrt{2\pi}},
\end{equation}
where $e_n\in\{-1,1\}$, 
shows that Proposition \ref{Prop:Polya} is optimal. This does not mean that it is the final word. On the one hand, 
\cite[\S~2 Corollary 1]{MR0159945} 
and
\cite[\S~3 Corollary]{MR0192030} 
have proved more precise results, including the following :
\begin{quote}
 For every $\epsilon>0$, there exists a transcendental Hurwitz function with 
$$
\limsup_{r\to\infty} \sqrt{2\pi r} \, \rme^{-r}\left(1+\frac{1+\epsilon}{24 r}\right)^{-1} |f|_r <1,
$$
while every Hurwitz function for which 
$$
\limsup_{r\to\infty} \sqrt{2\pi r}\, \rme^{-r}\left(1+\frac{1-\epsilon}{24 r}\right)^{-1}  |f|_r \le 1
$$
is a polynomial. 
\end{quote}

On the other hand, our Corollary \ref{Corollary:Polya} below extends the range of validity of Proposition \ref{Prop:Polya}.

\begin{proposition}\label{Proposition:Polya}
Let $f$ be an entire function and let $A\ge 0$. Assume 
\begin{equation}\label{eq:maingrowthconditionA}
\limsup_{r\to\infty}\rme^{-r}\sqrt r |f|_r< \frac{ \rme^{-A}}{\sqrt{2\pi} }\cdotp
\end{equation}
Then there exists $n_0>0$ such that, for $n\ge n_0$ and for all $z\in\C$ in the disc $|z|\le A$, we have 
$$
 |f^{(n)}(z)|<1.
$$ 
\end{proposition}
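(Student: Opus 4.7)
The plan is to apply Cauchy's inequality on a circle centered at $z$, then optimize the radius and invoke Stirling's formula. Fix $z$ in the disc $|z|\le A$, and let $R>0$. Cauchy's formula gives
$$
|f^{(n)}(z)|\le \frac{n!}{R^n}\sup_{|\zeta-z|=R}|f(\zeta)|\le \frac{n!}{R^n}|f|_{A+R},
$$
since $|\zeta|\le |z|+R\le A+R$. By assumption \eqref{eq:maingrowthconditionA}, there exist $C<\rme^{-A}/\sqrt{2\pi}$ and $r_0>0$ such that $|f|_r\le C\,\rme^r/\sqrt r$ for $r\ge r_0$. Plugging $r=A+R$ (with $R$ large enough that $A+R\ge r_0$), we get
$$
|f^{(n)}(z)|\le C\,\rme^A\cdot\frac{n!\,\rme^R}{R^n\sqrt{A+R}}.
$$

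Next I would choose $R=n$, which is the value that minimizes $\rme^R/R^n$ (derivative vanishes at $R=n$). This yields
$$
|f^{(n)}(z)|\le C\,\rme^A\cdot\frac{n!\,\rme^n}{n^n\sqrt{A+n}}.
$$
Stirling's formula $n!=n^n\rme^{-n}\sqrt{2\pi n}\,(1+o(1))$ then gives
$$
|f^{(n)}(z)|\le C\,\rme^A\sqrt{2\pi}\cdot\sqrt{\frac{n}{A+n}}\,(1+o(1))=C\,\rme^A\sqrt{2\pi}+o(1)
$$
as $n\to\infty$, uniformly in $z$ with $|z|\le A$.

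Since $C<\rme^{-A}/\sqrt{2\pi}$, the quantity $C\,\rme^A\sqrt{2\pi}$ is strictly less than $1$. Hence there exists $n_0$ such that for all $n\ge n_0$ the bound above is strictly below $1$ for every $z$ with $|z|\le A$, which is exactly the conclusion. There is no serious obstacle; the only care needed is (i) checking that $R=n$ is the valid optimizing choice and large enough to apply the growth bound, and (ii) ensuring the bound is uniform in $z$ in the closed disc $|z|\le A$, which it is because the estimate depends on $z$ only through the trivial inequality $|\zeta|\le|z|+R\le A+R$.
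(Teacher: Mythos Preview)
Your proof is correct and follows essentially the same route as the paper's: Cauchy's inequality followed by Stirling's formula. The only cosmetic difference is the choice of radius---the paper takes $r=n-A$ so that the Cauchy estimate involves $|f|_n$, whereas you take $R=n$ and land on $|f|_{A+n}$; both choices give the same limiting bound $C\,\rme^{A}\sqrt{2\pi}<1$.
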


\noindent

\begin{remark}{\rm 
When $A=0$, P\'olya's example \eqref{Eq:PolyaExample}
shows that the upper bound in the assumption of Proposition \ref{Proposition:Polya} is optimal.
}
\end{remark}

For the proof of Proposition \ref{Proposition:Polya}, we will use Cauchy's inequalities for an entire function $f$:
\begin{equation}\label{Equation:CauchyInequality}
\frac{|f^{(n)}(z_0)|}{n!}r^{n} \le |f|_{r+|z_0|},
\end{equation}
which are valid for all $z_0\in\C$, $n\ge 0$ and $r>0$. We will also use Stirling's Formula:
\begin{equation}\label{Equation:Stirling}
N^N\rme^{-N}\sqrt{2\pi N}
< N! <
N^N \rme^{-N} \sqrt{2\pi N} \rme^{1/(12N)}, 	
\end{equation}
which is valid for all $N\ge 1$.

\begin{proof}[Proof of Proposition \ref{Proposition:Polya}] 
Let $\epsilon>0$. By assumption, for $n$ sufficiently large, we have 
$$
|f|_n< (1-\epsilon) \frac{ \rme^{n-A}}{\sqrt{2\pi n} }\cdotp
$$
We use Cauchy's inequalities \eqref{Equation:CauchyInequality} with $r=n-A$: for $|z|\le A$, we have 
$$
|f^{(n)}(z)|\le \frac{n!}{(n-A)^n}|f|_n.
$$
Hence \eqref{Equation:Stirling} yields
$$
|f^{(n)}(z)|\le (1-\epsilon) 
 \rme^{-A+1/(12n)} 
 \left( 1-\frac A n \right)^{-n}.
$$
For $n$ sufficiently large the right hand side is $<1$.
\end{proof}

We deduce the following refinement of Proposition \ref{Prop:Polya}: 

\begin{corollary}\label{Corollary:Polya}
Let $f$ be an entire function. Let $A\ge 0$. 
Assume 
\eqref{eq:maingrowthconditionA}. 
Then the set
$$
\bigl\{(n,z_0)\in\N\times \C \; \mid \; |z_0|\le A, \; f^{(n)}(z_0)\in\Z\setminus\{0\} \bigr\}
$$
is finite. 
\end{corollary}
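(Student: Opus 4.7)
The plan is to deduce this directly from Proposition \ref{Proposition:Polya}. First I would apply that proposition to obtain an integer $n_0$ such that $|f^{(n)}(z)| < 1$ for every $n \ge n_0$ and every $z$ with $|z| \le A$. Since every nonzero integer has modulus at least $1$, no pair $(n, z_0)$ with $n \ge n_0$ and $|z_0| \le A$ can satisfy $f^{(n)}(z_0) \in \Z \setminus \{0\}$. This reduces the claim to showing that for each of the finitely many remaining indices $n \in \{0, 1, \dots, n_0 - 1\}$, the set of $z_0$ with $|z_0| \le A$ and $f^{(n)}(z_0) \in \Z \setminus \{0\}$ is finite.

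For such a fixed $n$, the entire function $f^{(n)}$ is bounded on the compact disc $\{|z| \le A\}$, say by $M_n$, so the only nonzero integers it can assume on this disc lie in the finite set $\{k \in \Z \setminus \{0\} : |k| \le M_n\}$. For each such $k$, provided $f^{(n)}$ is not identically equal to $k$, the entire function $f^{(n)} - k$ is nonconstant, and so its zero set is discrete; its intersection with the compact disc is therefore finite. Taking the finite union over the allowed pairs $(n, k)$ then concludes the argument.

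The only genuine obstacle is the degenerate possibility that $f^{(n)}$ is identically a nonzero integer for some $n < n_0$; this would force $f$ to be a polynomial of degree exactly $n$, in which case the set in question contains the full slice $\{n\} \times \{z \in \C : |z|\le A\}$ and is uncountable. This case is tacitly excluded, since the corollary is intended as a refinement of Proposition \ref{Prop:Polya} and will be applied to transcendental functions $f$, for which no derivative $f^{(n)}$ can be identically constant.
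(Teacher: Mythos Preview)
The paper does not give a separate proof; it presents the corollary as immediate from Proposition~\ref{Proposition:Polya}. Your deduction for $n\ge n_0$ is exactly what the paper intends, and is in fact all that is ever used afterwards: throughout the paper the corollary is applied only at the two fixed points $s_0,s_1$, where the indices $n<n_0$ contribute at most $2n_0$ pairs and need no further argument.

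Your more careful treatment of the range $n<n_0$ is correct, and in the process you have uncovered a small defect in the \emph{statement} rather than in your proof: for $A>0$ the corollary as written fails for a polynomial such as $f(z)=z$, since $f'\equiv 1$ puts the uncountable slice $\{1\}\times\{|z_0|\le A\}$ into the set. This is not a gap in your reasoning but an imprecision in the formulation; the intended reading is either ``$f$ transcendental'' (as you say) or, equivalently for all the applications in the paper, that $z_0$ ranges over a fixed finite set rather than a full disc. With either reading your argument is complete and matches the paper's approach.
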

 
\section{Lidstone polynomials}\label{S:Lidstone}

The theory of Lidstone polynomials and series has a long and rich history. We recall the definition and the basic results which we will need. 

\subsection{Definition and properties}\label{SS:Lidstone}

We denote by $\delta_{ij}$ the Kronecker symbol: 
$$
\delta_{ij}=\begin{cases}
1 & \text{ if $i=j$},
\\
0 & \text{ if $i\neq j$}.
\end{cases}
$$
By induction on $n$, one defines a sequence of polynomials $(\Lambda_n)_{n\ge 0}$ in $\Q[z]$
 by the conditions $ \Lambda_0(z)=z$ and 
 $$ 
\Lambda_n''=\Lambda_{n-1},\qquad \Lambda_n(0)=\Lambda_n(1)=0
\quad\hbox{for all $n\ge 1$}.
$$ 
For $n\ge 0$, the polynomial $\Lambda_n$ is odd, it has degree $2n+1$ and leading term $\frac{1}{(2n+1)!}z^{2n+1}$.
From the definition one deduces 
$$ 
 \Lambda_n^{(2k)}(0)=0 \hbox{ and } \Lambda_n^{(2k)}(1)=\delta_{k,n} \hbox{ for all $n\ge 0$ and $k\ge 0$}.
$$ 

This definition goes back to 
 \cite{zbMATH02567100}. 
See also
 \cite{MR1501639}, 
\cite{zbMATH02543645}, 
\cite[\S~9]{zbMATH02532117}, 
\cite{zbMATH03021531}, 
\cite[\S~9]{MR0029985}, 
\cite{MR0072947}, 
\cite[Chap.~I \S~4]{MR0162914}, 
\cite{MR2303366}, 
\cite[\S~1]{MR3849168}. 
A consequence of the definition is that any polynomial $f\in\C[z]$ has a finite expansion 
\begin{equation}\label{Equation:LidstoneExpansionPolynomials}
f(z)=\sum_{n=0}^\infty \left(
f^{(2n)}(0)\Lambda_n(1-z)+
f^{(2n)}(1)\Lambda_n(z)
\right)
\end{equation} 
with only finitely many nonzero terms in the series. 
 
Applying \eqref{Equation:LidstoneExpansionPolynomials} 
to the polynomial $ 
 z^{2n+1} $
yields the following recurrence formula \cite[Th.~2]{MR2303366}: 
for $n\ge 0$,
\begin{equation} \label{eq:Costabile}
 \Lambda_n(z) =\frac 1 {(2n+1)!} z^{2n+1} -\sum_{h=0}^{n-1}\frac 1 {(2n-2h+1)!} \Lambda_{h}(z).
 \end{equation}
For instance, 
$$
\Lambda_0(z)=z, \quad \Lambda_1(z)=\frac{1}{6} (z^3-z)
$$ 
and \cite[\S~6 p.~18]{zbMATH02567100} 
$$
 \Lambda_2(z)=\frac{1}{120} z^5-\frac{1}{36} z^3+\frac{7}{360} z= \frac{1}{360}z(z^2-1)(3z^2-7).
 $$ 
It follows from \eqref{Equation:LidstoneExpansionPolynomials} that for $n\ge 0$, a basis of the $\Q$--space 
of polynomials in $\Q[z]$ of degree $\le 2n+1$ is given by the $2n+2$ polynomials 
$$
\Lambda_0(z),\Lambda_1(z),
\dots, \Lambda_n(z), \quad
\Lambda_0(1-z),\Lambda_1(1-z),
\dots, \Lambda_n(1-z).
$$ 
Another consequence of \eqref{Equation:LidstoneExpansionPolynomials} is
$$ 
\frac{z^{2n}}{(2n)!}= \Lambda_n(1-z)+
\sum_{h=0}^n\frac 1 {(2n-2h)!} \Lambda_{h}(z) 
$$ 
for $n\ge 0$. 
 
 Lidstone expansion formula \eqref{Equation:LidstoneExpansionPolynomials} for polynomials extends to entire functions of finite exponential type
 --- see 
\cite[Th.~2]{zbMATH02543645}, 
 \cite[Th.~1]{MR1501639}, 
\cite[Th.~1]{zbMATH03021531}, 
\cite[Theorem p.~795]{MR0072947}, 
\cite[Th.~4.6]{MR0162914}. 
If $f$ has exponential type $<\pi$, then \eqref{Equation:LidstoneExpansionPolynomials} holds for $f$, the series being uniformly convergent on any compact of $\C$. 
Therefore, if an entire function $f$ has exponential type $<\pi$ and satisfies $f^{(2n)}(0)=f^{(2n)}(1)=0$ for all sufficiently large $n$, then $f$ is a polynomial. The following result deals with entire function $f$ of any finite exponential type.

\begin{proposition}\label{Proposition:LidstoneSine} 
Let $f$ be an entire function of finite exponential type $\tau(f)$ satisfying $f^{(2n)}(0)=f^{(2n)}(1)=0$ for all $n\ge 0$. Then there exist complex numbers $c_1,\dots,c_L$ with $L\le \tau(f)/\pi$ such that
$$ 
f(z)= \sum_{\ell=1}^L c_\ell \sin(\ell\pi z).
$$ 
\end{proposition}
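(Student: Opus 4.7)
My approach is to expand $f$ as a Fourier sine series on the interval $[0,1]$ and exploit the vanishing of all even-order derivatives at the endpoints, through repeated integration by parts, to eliminate every sufficiently high-frequency coefficient.

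First I would introduce the Fourier sine coefficients $a_\ell = 2\int_0^1 f(z)\sin(\ell\pi z)\,\rmd z$ for $\ell\ge 1$. Since $\sin(\ell\pi z)$ vanishes at $z=0,1$ and since, by hypothesis, $f^{(2k)}(0)=f^{(2k)}(1)=0$ for every $k\ge 0$, integration by parts $2n$ times destroys every boundary contribution and yields
$$
a_\ell = \frac{2(-1)^n}{(\ell\pi)^{2n}}\int_0^1 f^{(2n)}(z)\sin(\ell\pi z)\,\rmd z
$$
for every $n\ge 0$.

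Next I would invoke the exponential type hypothesis: Cauchy's inequalities \eqref{Equation:CauchyInequality} combined with Stirling's formula \eqref{Equation:Stirling} give, for every $\epsilon>0$ and all sufficiently large $n$, a uniform bound of the form $|f^{(2n)}(z)|\le C_\epsilon (\tau(f)+\epsilon)^{2n}\sqrt{n}$ for $z\in[0,1]$. Plugging this into the integral representation of $a_\ell$ produces a right-hand side dominated by $C'_\epsilon \sqrt{n}\bigl((\tau(f)+\epsilon)/(\ell\pi)\bigr)^{2n}$, which tends to $0$ as $n\to\infty$ whenever $\ell\pi>\tau(f)$. Hence $a_\ell=0$ for every $\ell>\tau(f)/\pi$.

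Finally I would set $L=\lfloor\tau(f)/\pi\rfloor$ and $c_\ell=a_\ell$ for $1\le\ell\le L$, and consider the entire function
$$
g(z) := f(z)-\sum_{\ell=1}^L c_\ell\sin(\ell\pi z).
$$
By construction all Fourier sine coefficients of $g$ on $[0,1]$ vanish; completeness of $\{\sin(\ell\pi z)\}_{\ell\ge 1}$ in $L^2[0,1]$ together with the continuity of $g$ then forces $g\equiv 0$ on $[0,1]$, and the identity principle for entire functions extends this to $\C$. The only technical step is the uniform derivative bound on $[0,1]$ in the second paragraph, but this follows routinely from the Cauchy/Stirling tools the paper has already made explicit; everything else is bookkeeping.
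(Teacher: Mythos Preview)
Your argument is correct. The integration-by-parts identity is valid because the boundary terms alternate between those involving $f^{(2k)}(0),f^{(2k)}(1)$ (which vanish by hypothesis) and those involving $\sin(\ell\pi\cdot 0),\sin(\ell\pi\cdot 1)$ (which vanish identically); the derivative bound on $[0,1]$ follows exactly as you say from \eqref{Equation:CauchyInequality} and \eqref{Equation:Stirling}; and the $L^2$--completeness of $\{\sin(\ell\pi z)\}_{\ell\ge 1}$ together with continuity and the identity principle finishes the job.

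Note, however, that the paper does \emph{not} supply its own proof of this proposition: it is quoted from the literature (Whittaker, Poritsky, Schoenberg, Buck, Boas--Buck), and the surrounding discussion only explains the easier case $\tau(f)<\pi$ via the convergent Lidstone expansion \eqref{Equation:LidstoneExpansionPolynomials}. The classical route to the full statement goes through the Lidstone series with an explicit integral remainder (as in \eqref{equation:IntegralFormula}) or, equivalently, a contour-integral representation of $f$ that isolates the poles of $1/\sinh t$ at $t=i\ell\pi$; the residues at those poles produce the $\sin(\ell\pi z)$ terms, and the remainder is shown to vanish when the contour radius stays below $\tau(f)$. Your Fourier-analytic argument bypasses the Lidstone machinery entirely and is more elementary and self-contained for this particular statement; the contour/Lidstone approach, on the other hand, yields the expansion \eqref{Equation:shcosech} and the integral formula \eqref{equation:IntegralFormula} as by-products, which the paper actually uses later.
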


Let $t\in\C$, $t\not\in i\pi\Z$. The entire function 
$$
f(z)=\frac{\sinh(zt)}{\sinh (t)}=\frac{\rme^{zt}-\rme^{-zt}}{\rme^t-\rme^{-t}}
$$
satisfies 
$$
f''=t^2f,\quad f(0)=0,\quad f(1)=1,
$$
hence $f^{(2n)}(0)=0$ and $f^{(2n)}(1)=t^{2n}$ for all $n\ge 0$.
Applying Proposition \ref{Proposition:LidstoneSine} yields the following expansion, valid for $0<|t|<\pi$ and $z\in\C$:
\begin{equation}\label{Equation:shcosech} 
\frac{\sinh(zt)}{\sinh (t)}=\sum_{n=0}^\infty t^{2n}\Lambda_n(z).
\end{equation}

Using Cauchy's residue Theorem with \eqref{Equation:shcosech}, we deduce the integral formula
\cite[p.~454--455]{zbMATH02543645}: 
\begin{align}\notag
\Lambda_n(z)
=(-1)^n\frac{2}{\pi^{2n+1} } 
\sum_{s=1}^{S} \frac{(-1)^s}{s^{2n+1}} 
&\sin \bigl( s\pi z\bigr)
\\
\notag
&
+\frac{1}{2\pi i} \int_{|t|=(2S+1)\pi/2} t^{-2n-1}\frac
{\sinh(zt)}{\sinh(t)}
\rmd t
\end{align}
for $S=1,2,\dots$ and $z\in\C$.
In particular, with $S=1$ we have
\begin{equation}\label{equation:IntegralFormula}
\Lambda_n(z)=(-1)^n\frac{2}{\pi^{2n+1} } \sin (\pi z)
+
\frac{1}{2\pi i} \int_{|t|=3\pi/2} t^{-2n-1}\frac
{\sinh(zt)}{\sinh(t)}
\rmd t.
\end{equation}

\subsection{Replacing $0$ and $1$ with $s_0$ and $s_1$}\label{SS:LidstoneTilde}

Let $s_0$ and $s_1$ be two distinct complex numbers. Define, for $n\ge 0$,
 $$
 \Lambdatilde_n(z)= (s_1-s_0)^{2n} \Lambda_n\left( \frac{z}{s_1-s_0}\right).
 $$
This sequence of polynomials is also defined by induction by
 $$
 \Lambdatilde_0(z)=\frac z {s_1-s_0} 
 $$
 and, for $n\ge 1$,
 $$
 \Lambdatilde_n''=\Lambdatilde_{n-1}, \quad \Lambdatilde_n(0)=\Lambdatilde_n(s_1-s_0)=0.
 $$
Hence 
 $$
 \Lambdatilde_n^{(2k)}(0)=0 \hbox{ and } \Lambdatilde_n^{(2k)}(s_1-s_0)=\delta_{k,n} \hbox{ for all $n\ge 0$ and $k\ge 0$}.
 $$
It follows that any polynomial $f\in\C[z]$ has an expansion
$$
f(z)=\sum_{n=0}^\infty 
\left( f^{(2n)}(s_1) \Lambdatilde_n(z-s_0)
-
f^{(2n)}(s_0) \Lambdatilde_n(z-s_1)\right),
$$
with only finitely many nonzero terms in the series. 

From \eqref{eq:Costabile} we deduce
\begin{equation} \label{eq:Costabiletilde}
 \Lambdatilde_n(z) =\frac {z^{2n+1}} {(s_1-s_0)(2n+1)!} -\sum_{h=0}^{n-1}\frac {(s_1-s_0)^{2n-2h} }{(2n-2h+1)!} \Lambdatilde_{h}(z).
\end{equation}
 
We will use the following elementary auxiliary lemma. 

\begin{lemma}\label{Lemma:Stirling}
There exists an absolute constant $r_0>0$ such that, for any $r\ge r_0$ and any $t$ in the interval $0<t\le r$, we have 
$$
\frac{r}{t}(1+\log t)+\frac{1}{2}\log t < r+\frac{1}{4r}\cdotp
$$
\end{lemma}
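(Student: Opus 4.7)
The plan is to treat the left-hand side minus $r$ as a function of $t$, locate its maximum on $(0,r]$, and verify that this maximum is strictly below $1/(4r)$ once $r$ is sufficiently large. Accordingly, set
$$
\phi(t):=\frac{r}{t}(1+\log t)+\frac{1}{2}\log t-r,
$$
so that the desired inequality reads $\phi(t)<1/(4r)$ for all $t\in(0,r]$.

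First I locate the maximum. A direct computation yields
$$
\phi'(t)=\frac{1}{2t^{2}}\bigl(t-2r\log t\bigr),
$$
so the critical points are the solutions of $\log t=t/(2r)$. The auxiliary function $g(t)=2r\log t-t$ is strictly concave on $(0,\infty)$ with maximum at $t=2r$; since $g(1)=-1<0$ while $g(r)=r(2\log r-1)>0$ for $r>\rme^{1/2}$, there is a unique $t^{\ast}\in(1,r)$ with $g(t^{\ast})=0$. Because $\phi(t)\to-\infty$ as $t\to 0^{+}$, and because $\phi$ is increasing on $(0,t^{\ast})$ and decreasing on $(t^{\ast},r]$, the maximum of $\phi$ on $(0,r]$ is attained precisely at $t^{\ast}$.

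Next I simplify $\phi(t^{\ast})$ using the critical relation $\log t^{\ast}=t^{\ast}/(2r)$:
$$
\phi(t^{\ast})=\frac{r}{t^{\ast}}+\frac{1}{2}+\frac{t^{\ast}}{4r}-r.
$$
Introducing $x^{\ast}:=\log t^{\ast}$, so that $t^{\ast}=2rx^{\ast}$, this rewrites as
$$
\phi(t^{\ast})=\frac{1}{2x^{\ast}}+\frac{x^{\ast}}{2}+\frac{1}{2}-r,
$$
and the critical equation becomes $x^{\ast}\rme^{-x^{\ast}}=1/(2r)$. Inverting the power series $xe^{-x}=x-x^{2}+x^{3}/2-\cdots$ order by order gives
$$
x^{\ast}=\frac{1}{2r}+\frac{1}{4r^{2}}+\frac{3}{16r^{3}}+O(r^{-4}).
$$
Substituting this expansion into $\frac{1}{2x^{\ast}}+\frac{x^{\ast}}{2}$, the $r^{0}$-terms cancel and careful bookkeeping yields
$$
\phi(t^{\ast})=\frac{1}{8r}+O(r^{-2}),
$$
which is strictly below $1/(4r)$ for every $r$ beyond some absolute constant $r_{0}$.

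The main obstacle is the tightness of the inequality: the $r^{0}$-terms in $\phi(t^{\ast})$ cancel exactly, and the surviving leading term $1/(8r)$ is only a factor $2$ smaller than the target $1/(4r)$. In particular a first-order expansion $x^{\ast}\sim 1/(2r)$ is not enough; one must carry the expansion of $x^{\ast}$ to second order in $1/r$, and then verify that the $O(r^{-2})$ remainder can be absorbed into the gap $1/(8r)$. This last step is what pins down the admissible value of $r_{0}$.
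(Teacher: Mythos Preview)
Your proof is correct and follows essentially the same approach as the paper: compute the derivative $\phi'(t)=\frac{1}{2t^{2}}(t-2r\log t)$, locate the relevant critical point $t^{\ast}$ near $1$, use the critical relation to simplify $\phi(t^{\ast})=\frac{r}{t^{\ast}}+\frac{1}{2}+\frac{t^{\ast}}{4r}-r$, and expand asymptotically to obtain $\phi(t^{\ast})=\frac{1}{8r}+O(r^{-2})$. The only cosmetic differences are that the paper first disposes of the ranges $0<t\le 1$ and $\sqrt{r}\le t\le r$ by inspection and works directly with $t_{1}$ and $\log t_{1}$, whereas you restrict to $(0,r]$ from the outset and introduce the variable $x^{\ast}=\log t^{\ast}$; the substance is identical.
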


\begin{proof}
Notice first that the result is true for $0<t\le 1$ and $\sqrt r\le t\le r$.

Let $r$ be a sufficiently large positive real number. 
Define, for $t>0$
$$
f(t)=\frac{r}{t}(1+\log t)+\frac{1}{2}\log t.
$$
The derivative $f'$ of $f$ is 
$$
f'(t)=\frac{1}{2t^2}(t-2r\log t)
$$ 
and $f'(t)$ has two positive zeroes $1<t_1<t_2$, where $t_1$ is close to $1$ while $t_2$ is close to $2r\log r$ when $r$ is large. 
Since $f(e^r)<r=f(1)<f(t_1)$, in the interval $0<t\le \rme^r$, the function $f$ has its maximum at $t_1$ with $t_1=2r\log t_1$,
$$
t_1=1+\frac{1}{2r}+\frac{3}{8r^2}+\frac{1}{3r^3}+O(1/r^4)
$$
and 
$$
\log t_1= \frac{1}{2r}+\frac{1}{4r^2}+\frac{3}{16r^3}+ O(1/r^4)
$$
for $r\to\infty$.
The maximum is 
$$
f(t_1)=
\frac{r}{t_1}+\frac{1}{2}+\frac{t_1}{4r}
$$
and we have
$$
\frac{r}{ t_1}=\frac{1}{2\log t_1}=r-\frac{1}{2}-\frac{1}{8r} +O(1/r^2),
$$
so that
$$
f(t_1)= r+\frac{1}{8r}+O(1/r^2)<r+\frac{1}{4r}
$$
for sufficiently large $r$. 
\end{proof}

Setting $t=r/N$ and using the left hand side of Stirling's Formula \eqref{Equation:Stirling}, we deduce from Lemma \ref{Lemma:Stirling}:

\begin{corollary}\label{Cor:Stirling}
For sufficiently large $r$ and 
for all $N\ge 1$, we have 
$$
\frac{r^N}{N!} \le \frac{ \rme^{r+(1/4r)}}{\sqrt {2\pi r}}\cdotp
$$
\end{corollary}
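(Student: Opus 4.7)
The plan is to apply the left-hand inequality of Stirling's Formula \eqref{Equation:Stirling} to turn $r^N/N!$ into an expression depending on the single parameter $t = r/N$, and then invoke Lemma \ref{Lemma:Stirling} to bound that expression from above.

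First I would start from $N! > N^N \rme^{-N}\sqrt{2\pi N}$, which gives
$$
\frac{r^N}{N!} < \frac{1}{\sqrt{2\pi N}}\left(\frac{r}{N}\right)^{\!N}\rme^{N}.
$$
Setting $t = r/N$ (so that $N = r/t$ and $N \ge 1$ translates to $0 < t \le r$), the right-hand side becomes
$$
\sqrt{\frac{t}{2\pi r}}\,\rme^{(r/t)(1+\log t)}.
$$
Taking logarithms, this means
$$
\log\frac{r^N}{N!} < \frac{r}{t}(1+\log t) + \frac{1}{2}\log t - \frac{1}{2}\log(2\pi r).
$$

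Now Lemma \ref{Lemma:Stirling} asserts precisely that, for $r$ sufficiently large and $0 < t \le r$, the first two terms on the right are bounded above by $r + 1/(4r)$. Substituting gives
$$
\log\frac{r^N}{N!} < r + \frac{1}{4r} - \frac{1}{2}\log(2\pi r),
$$
and exponentiating yields the claimed inequality
$$
\frac{r^N}{N!} \le \frac{\rme^{r+1/(4r)}}{\sqrt{2\pi r}}.
$$

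There is no real obstacle here once Lemma \ref{Lemma:Stirling} is in hand; the only thing to be careful about is that the change of variable $t = r/N$ makes the parameter range $N \ge 1$ correspond exactly to $0 < t \le r$, which is precisely the range covered by the lemma. The fact that the lemma's bound is uniform in $t$ on $(0,r]$ is what allows the resulting inequality to hold uniformly in $N \ge 1$.
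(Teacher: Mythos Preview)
Your proof is correct and matches the paper's approach exactly: the paper deduces the corollary from Lemma~\ref{Lemma:Stirling} by setting $t=r/N$ and invoking the left-hand inequality of Stirling's formula \eqref{Equation:Stirling}, precisely as you do.
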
 
 
 Corollary \ref{Cor:Stirling} will be used in the proof of part (ii) of the following result. 
 
\begin{lemma}\label{Lemma:upperbound|Lambdatilde|} 
Let $s_0$ and $s_1$ be two distinct complex numbers. 
There exist positive contants $\gamma_1$, $\gamma_2$ and $\gamma_3$ such that the following holds. 
\\
(i) For $r\ge 0$ and $n\ge 0$, we have
 $$
| \Lambdatilde_n|_r\le \gamma_1 \frac{ |s_1-s_0|^{2n}} {(2n+1)!}\max
\left\{ \frac r {|s_1-s_0|}, 2n+1\right \}^{2n+1}.
 $$ 
 (ii) Assume \eqref{equation:majoration|s0-s1|even}. Then, 
for sufficiently large $r$, we have, for all $n\ge 0$,
$$
 |\Lambdatilde_n|_r \le \gamma_2 \frac {\rme^{r+1/(4r)}}{\sqrt {2\pi r}}\cdotp
$$
(iii) For $r\ge 0$ and $n\ge 0$, 
$$
|\Lambdatilde_n|_r\le \gamma_3 \left(\frac{|s_1-s_0|}{\pi}\right)^{2n} \rme^{\frac {3\pi r}{2|s_1-s_0|}}.
$$
 \end{lemma}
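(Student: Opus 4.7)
Set $\delta=|s_1-s_0|$. I handle the three parts in the order (iii), (i), (ii); the estimate (ii) rests on an idea separate from the first two.

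For part (iii), apply the integral formula \eqref{equation:IntegralFormula} directly. On the circle $|t|=3\pi/2$, $|\sinh t|$ has a positive lower bound $m$, since $3\pi/2$ avoids every zero $ik\pi$ of $\sinh$. For $|z|=R$, estimate $|\sin\pi z|\le\sinh(\pi R)$ and $|\sinh(zt)|\le\sinh(3\pi R/2)$, then bound the contour integral by the maximum of its integrand times $2\pi(3\pi/2)$, to obtain $|\Lambda_n|_R\le (2/\pi^{2n+1})\sinh(\pi R)+(3\pi/2)^{-2n}\sinh(3\pi R/2)/(2m)\le C\pi^{-2n}\rme^{3\pi R/2}$ for an absolute constant $C$. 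Scaling via $|\Lambdatilde_n|_r=\delta^{2n}|\Lambda_n|_{r/\delta}$ with $R=r/\delta$ yields (iii).

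For part (i), induct on $n$ using \eqref{eq:Costabiletilde}. The base $n=0$ is immediate from $\Lambdatilde_0(z)=z/(s_1-s_0)$. In the inductive step, substitute the inductive bound on each $|\Lambdatilde_h|_r$ for $h<n$ into the recursion; this reduces matters to controlling a weighted sum of the form $\sum_{h=0}^{n-1}\max(r/\delta,2h+1)^{2h+1}/[(2n-2h+1)!(2h+1)!]$. Use the closed-form identity
$$\sum_{h=0}^{n}\frac{X^{2h+1}}{(2h+1)!(2n+1-2h)!}=\frac{(1+X)^{2n+2}-(1-X)^{2n+2}}{2(2n+2)!},$$
and split according to whether $r\ge (2n+1)\delta$ (so $\max=r/\delta$ throughout) or $r<(2n+1)\delta$ (so $\max=2n+1$ in the dominant range). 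Choosing $\gamma_1$ large enough closes the induction.

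For part (ii), the key is a generating-function identity. Substituting $t\mapsto\delta t$ in \eqref{Equation:shcosech}, together with $\Lambdatilde_n(z)=\delta^{2n}\Lambda_n(z/\delta)$, gives
$$\sum_{n\ge 0}t^{2n}\Lambdatilde_n(z)=\frac{\sinh(zt)}{\sinh(\delta t)},\qquad|t|<\pi/\delta.$$
Under hypothesis \eqref{equation:majoration|s0-s1|even} we have $\delta<\nu<\pi$, so the unit circle $|t|=1$ lies strictly inside the disc of convergence, and Cauchy's coefficient formula delivers
$$\Lambdatilde_n(z)=\frac{1}{2\pi i}\oint_{|t|=1}\frac{\sinh(zt)}{\sinh(\delta t)}\,t^{-2n-1}\,\rmd t.$$
The crucial point is that the factor $\delta^{2n}$ has been \emph{absorbed} into the integrand, so the estimate below will be uniform in $n$. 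For $|z|=r$, use $|\sinh(ze^{i\theta})|\le\cosh(r|\cos\theta|)$ (from $|\sinh(a+ib)|^2=\sinh^2a+\sin^2b\le\cosh^2a$) and $|\sinh(\delta e^{i\theta})|\ge\sin\delta$ (the minimum, attained at $\theta=\pm\pi/2$, is positive because $0<\delta<\pi$). Integrating,
$$|\Lambdatilde_n|_r\le\frac{1}{2\pi\sin\delta}\int_0^{2\pi}\cosh(r|\cos\theta|)\,\rmd\theta=\frac{I_0(r)}{\sin\delta},$$
where $I_0$ denotes the modified Bessel function of the first kind. The classical asymptotic $I_0(r)\le\rme^{r+1/(4r)}/\sqrt{2\pi r}$ for sufficiently large $r$ then yields (ii) with $\gamma_2=1/\sin\delta$. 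The main obstacle is precisely (ii): a naive attempt to deduce it from (i) fails because in the range $r<(2n+1)\delta$ the bound from (i), after Stirling, is of order $(\delta\rme)^{2n}/\sqrt{2n+1}$, which blows up in $n$ whenever $\delta>1/\rme$. The generating-function trick — integrating on the unit circle rather than on $|t|=\delta$ — is what cancels the $\delta^{2n}$ factor and produces a uniform-in-$n$ bound in one stroke.
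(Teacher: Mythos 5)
Your parts (i) and (iii) follow essentially the paper's own route: (iii) is exactly the estimate of the contour integral in \eqref{equation:IntegralFormula} on $|t|=3\pi/2$ followed by the rescaling $|\Lambdatilde_n|_r=\delta^{2n}|\Lambda_n|_{r/\delta}$, and (i) is the same induction on \eqref{eq:Costabiletilde}; your binomial identity plus the case split on $r\gtrless(2n+1)\delta$ does close the induction with a uniform $\gamma_1$ (the ratio it produces is bounded by $\tfrac12(1+\tfrac1{2n+1})^{2n+1}\le \rme/2<2$), though the paper's single inequality $(2n+1)!/(2h+1)!\le(2n+1)^{2n-2h}$ achieves the same more directly. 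The genuine divergence is (ii). The paper proves (ii) by a second induction on \eqref{eq:Costabiletilde} combined with Corollary \ref{Cor:Stirling}, and the hypothesis \eqref{equation:majoration|s0-s1|even} enters precisely to make the constant $\gamma_2=2/(4|s_1-s_0|-\rme^{|s_1-s_0|}+\rme^{-|s_1-s_0|})$ positive. You instead read $\Lambdatilde_n(z)$ off as a Taylor coefficient of the generating function \eqref{Equation:shcosech} via Cauchy's formula on the circle $|t|=1$, which makes the $n$-dependence disappear ($|t^{-2n-1}|=1$) and reduces everything to the Bessel-type integral $\frac1{2\pi}\int_0^{2\pi}\cosh(r\cos\theta)\,\rmd\theta=I_0(r)\le \rme^{r+1/(4r)}/\sqrt{2\pi r}$ for large $r$. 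This argument is correct (your minimum claim $|\sinh(\delta\rme^{i\theta})|\ge\sin\delta$ does hold, via $\sin^2\delta-\sin^2(\delta|\sin\theta|)=\sin(\delta(1+|\sin\theta|))\sin(\delta(1-|\sin\theta|))\le\delta^2\cos^2\theta\le\sinh^2(\delta\cos\theta)$), and it buys something the paper's induction does not: it needs only $|s_1-s_0|<\pi$, not \eqref{equation:majoration|s0-s1|even}, so it would propagate to Theorem \ref{Th:TwoPointsEvenExistence} and answer (affirmatively) the paper's own remark that one might hope to replace $\nu$ by $\pi$ there, with $\gamma_2=1/\sin|s_1-s_0|$. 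Two cosmetic slips to repair in a final write-up: the denominator of the generating function is $\sinh\bigl((s_1-s_0)t\bigr)$, not $\sinh\bigl(|s_1-s_0|t\bigr)$, since $s_1-s_0$ is complex --- harmless because on the full circle $|t|=1$ the minimum of $|\sinh((s_1-s_0)t)|$ equals that of $|\sinh(|s_1-s_0|t)|$ by rotation; similarly $|\sinh(z\rme^{i\theta})|\le\cosh\bigl(r\cos(\theta+\arg z)\bigr)$ rather than $\cosh(r|\cos\theta|)$ pointwise, which again costs nothing after integrating over $\theta\in[0,2\pi]$.
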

 
 \begin{proof}
 $\quad$\null
 \\
 (i) Let $(\kappa_0,\kappa_1,\kappa_2,\dots)$ be a sequence of positive numbers satisfying $ \kappa_0\ge 1$ and, for $n\ge 1$,
$$
\kappa_n\ge 1+ \sum_{h=0}^{n-1} \frac{\kappa_h}{(2n-2h+1)!} \cdotp
$$
By induction we prove the estimate, for $z\in\C$, 
\begin{equation}\label{equation:c}
| \Lambdatilde_n(z)|\le \kappa_n \frac{ |s_1-s_0|^{2n}} {(2n+1)!}\max
\left\{ \frac {|z|} {|s_1-s_0|}, 2n+1\right \}^{2n+1}.
\end{equation}
Formula \eqref{equation:c} is true for $n=0$. Assume that, for some $n\ge 1$, \eqref{equation:c} is true for $n$ replaced with $h$ for all $h=0,1,\dots,n-1$. Then for $0\le h\le n-1$ we have 
$$
|\Lambdatilde_h(z)|
\le
\kappa_h
 \frac{ |s_1-s_0|^{2h}} {(2h+1)!}\max
\left\{ \frac {|z|} {|s_1-s_0|}, 2n+1\right \}^{2h+1}.
$$
We use the upper bound 
$$
\frac{(2n+1)!}{(2h+1)!}\le (2n+1)^{2n-2h}.
$$
We deduce, for $0\le h\le n-1$,
$$
|s_1-s_0|^{2n-2h}
|\Lambdatilde_h(z)|
\le
\kappa_{h}
 \frac{ |s_1-s_0|^{2n}} {(2n+1)!}\max
\left\{ \frac {|z|} {|s_1-s_0|}, 2n+1\right \}^{2n+1}.
$$
Now \eqref{eq:Costabiletilde} implies
$$
| \Lambdatilde_n(z)|\le
\left(
1+
\sum_{h=0}^{n-1} \frac{ \kappa_h }{(2n-2h+1)!} \right)
 \frac{ |s_1-s_0|^{2n}} {(2n+1)!}\max
\left\{ \frac {|z|} {|s_1-s_0|}, 2n+1\right \}^{2n+1},
$$
which proves \eqref{equation:c}.

We deduce part (i) of Lemma \ref{Lemma:upperbound|Lambdatilde|} by taking for the sequence $(\kappa_h)_{h\ge 0}$ a constant sequence $\kappa_h=\gamma_1$ with
$$
\gamma_1=1+\gamma_1\sum_{\ell\ge 1} \frac{1}{(2\ell+1)!}\cdotp
$$
This proves (i) with the explicit value
$$
\gamma_1=\frac{2}{ 4-\rme +\rme^{-1}}=1.212\, 136\, 8\dots
$$

\medskip\noindent
(ii)
Fix $r$ sufficiently large. 
Let $(\kappatilde_n)_{n\ge 0}$ be another sequence of positive real numbers satisfying $ \kappatilde_0\ge 1/|s_1-s_0|$ and, for $n\ge 1$,
\begin{equation}\label{eq:kappatilde}
\kappatilde_n\ge \frac 1 {|s_1-s_0|}+ \sum_{h=0}^{n-1} \kappatilde_h \frac{|s_1-s_0|^{2n-2h}}{(2n-2h+1)!} \cdotp
\end{equation}
We prove the estimate 
\begin{equation}\label{equation:gamma}
| \Lambdatilde_n|_r\le \kappatilde_n \frac{ \rme^{r+(1/4r)}}{\sqrt {2\pi r}} \cdotp
\end{equation}
This is true for $n=0$, since $r$ is sufficiently large.
Assume that it is true for all $h$ with $0\le h<n$ for some $n\ge 1$. Using the induction hypothesis with \eqref{eq:Costabiletilde}, we obtain
 $$
 | \Lambdatilde_n|_r\le \frac{r^{2n+1}}{|s_1-s_0| (2n+1)!} +
 \frac{ \rme^{r+(1/4r)}}{\sqrt {2\pi r}} 
 \sum_{h=0}^{n-1}\kappatilde_h \frac{|s_1-s_0|^{2n-2h}}{(2n-2h+1)!}\cdotp
$$
Now \eqref{equation:gamma} follows from \eqref{eq:kappatilde} and Corollary \ref{Cor:Stirling}. 
We take for the sequence $(\kappatilde_h)_{h\ge 0}$ a constant sequence $\kappatilde_h=\gamma_2$ with
$$
\gamma_2= \frac 1 {|s_1-s_0|}+\gamma_2\sum_{\ell\ge 1} \frac{|s_1-s_0|^{2\ell}}{(2\ell+1)!}\cdotp
$$
Since \eqref{equation:majoration|s0-s1|even} can be written
$$
4|s_1-s_0|-\rme^{|s_1-s_0|}+\rme^{-|s_1-s_0|}>0, 
$$
we deduce part (ii) of Lemma \ref{Lemma:upperbound|Lambdatilde|} with 
$$
\gamma_2=
\frac{2}{ 4|s_1-s_0|-\rme^{|s_1-s_0|}+\rme^{-|s_1-s_0|}}\cdotp
$$ 

\medskip\noindent
(iii) From \eqref{equation:IntegralFormula} we deduce
$$
|\Lambda_n|_r\le 
\rme^{(3\pi/2) r} \pi^{-2n}
\left(\frac{2}{\pi} \rme^{-\pi r/2}+
\frac{2^{2n+1}}{3^{2n}} \sup_{|t|=3\pi/2} \frac{1}{ |\rme^t-\rme^{-t}|
}
\right).
$$ 
The proof of Lemma \ref{Lemma:upperbound|Lambdatilde|} is complete.
 \end{proof}
 
From part (iii) of Lemma \ref{Lemma:upperbound|Lambdatilde|} we deduce the following. 

\begin{corollary}\label{Cor:uperboundLidstoneWhittaker}
Assume $|s_1-s_0|<\pi$. There exists a constant $\gamma_4>0$ such that, for $r$ sufficiently large, 
$$
\sum_{n\ge \gamma_4 r}|\Lambdatilde_n|_r<1.
$$
\end{corollary}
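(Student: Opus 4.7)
The plan is to plug the third estimate of Lemma \ref{Lemma:upperbound|Lambdatilde|} directly into the tail of the series and compare the resulting geometric sum against the exponential in $r$. Write
$$
q=\left(\frac{|s_1-s_0|}{\pi}\right)^2,
$$
so that the assumption $|s_1-s_0|<\pi$ translates into $0<q<1$. By part (iii) of Lemma \ref{Lemma:upperbound|Lambdatilde|},
$$
\sum_{n\ge \gamma_4 r}|\Lambdatilde_n|_r \le \gamma_3\,\rme^{\frac{3\pi r}{2|s_1-s_0|}} \sum_{n\ge \gamma_4 r} q^{n} = \frac{\gamma_3}{1-q}\,\rme^{\frac{3\pi r}{2|s_1-s_0|}}\,q^{\lceil \gamma_4 r\rceil}.
$$

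Taking logarithms, the right-hand side is less than $1$ as soon as
$$
\gamma_4\,r\,\log(1/q) \; >\; \frac{3\pi r}{2|s_1-s_0|} +\log\!\left(\frac{\gamma_3}{1-q}\right).
$$
Since $\log(1/q)=2\log(\pi/|s_1-s_0|)>0$, any constant
$$
\gamma_4 \; >\; \frac{3\pi}{4\,|s_1-s_0|\,\log(\pi/|s_1-s_0|)}
$$
will, for all sufficiently large $r$, make the linear term in $r$ on the left exceed the linear term on the right, and then absorb the constant term on the right. Fixing such a $\gamma_4$ concludes the proof.

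The only step with any content is making sure the summation index $n\ge \gamma_4 r$ genuinely produces a geometric-series factor $q^{\gamma_4 r}$ whose logarithmic decay rate $\gamma_4\log(1/q)$ can be made to beat the fixed growth rate $3\pi/(2|s_1-s_0|)$; this is exactly why the hypothesis $|s_1-s_0|<\pi$ (i.e.\ $q<1$, $\log(1/q)>0$) is used, and no further obstacle arises.
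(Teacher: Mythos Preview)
Your proof is correct and follows essentially the same route as the paper: apply part (iii) of Lemma~\ref{Lemma:upperbound|Lambdatilde|}, sum the resulting geometric series, and choose $\gamma_4$ so that the decay rate $\gamma_4\log(1/q)=2\gamma_4\log(\pi/|s_1-s_0|)$ beats the growth rate $3\pi/(2|s_1-s_0|)$. The only cosmetic difference is your introduction of the shorthand $q=(|s_1-s_0|/\pi)^2$; the resulting threshold for $\gamma_4$ is identical to the paper's.
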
 

From \eqref{Equation:shcosech} it follows that the assumption $|s_1-s_0|<\pi$ cannot be relaxed. 

\begin{proof}[Proof of Corollary \ref{Cor:uperboundLidstoneWhittaker}] 
Let $N$ be a positive integer. From part (iii) of Lemma \ref{Lemma:upperbound|Lambdatilde|} we deduce 
\begin{align}\notag
\sum_{n\ge N}|\Lambdatilde_n|_r&\le \gamma_3 
\rme^{\frac {3\pi r}{2|s_1-s_0|}}\sum_{n\ge N} \left(\frac{|s_1-s_0|}{\pi}\right)^{2n} 
\\
\notag
&=
\frac{\gamma_3\pi^2}{\pi^2-|s_1-s_0|^2}\rme^{\frac {3\pi r}{2|s_1-s_0|}}\left(\frac{|s_1-s_0|}{\pi}\right)^{2N}\cdotp
\end{align}
The right hand side is $<1$ as soon as 
$$
\frac {3\pi r}{2|s_1-s_0|} +\log \frac{\gamma_3\pi^2}{\pi^2-|s_1-s_0|^2}<2N\log \frac{\pi}{|s_1-s_0|},
$$
and this is true for $r$ sufficiently large and $N\ge \gamma_4r$, provided that 
$$
\gamma_4>\frac{3\pi}{4|s_1-s_0|(\log \pi - \log |s_1-s_0|)}\cdotp
$$ 
 \end{proof} 
 
\section{Derivatives of even order at two points. 
}\label{S:Even}
 
\subsection{Proof of Theorem \ref{Th:TwoPointsEven}} 

\begin{proof}[Proof of Theorem \ref{Th:TwoPointsEven}]
Let $f$ satisfy the assumptions of Theorem \ref{Th:TwoPointsEven}. Using 
Corollary \ref{Corollary:Polya}, we deduce from the assumption \eqref{eq:maingrowthcondition} that 
 the sets
$$ 
\{ n\ge 0\; \mid \; f^{(2n)}(s_0)\not=0\}
\;
\text{ and } 
\;
\{ n\ge 0\; \mid \; f^{(2n)}( s_1)\not=0\}
$$ 
are finite. Hence
$$
P(z)=
\sum_{n=0}^\infty 
\left( f^{(2n)}(s_1) \Lambdatilde_n(z-s_0)
+ 
f^{(2n)}(s_0) \Lambdatilde_n(z-s_1)\right)
$$
is a polynomial satisfying 
$$
P^{(2n)}(s_0)=f^{(2n)}(s_0)
\;
\text{ and }
\;
P^{(2n)}( s_1)=f^{(2n)}( s_1)
\;
\text{ for all } 
\;
n\ge 0.
$$
The function $\ftilde(z)=f(z)-P(z)$ has the same exponential type as $f$ and satisfies 
 $$
 \ftilde^{(2n)}(s_0)= \ftilde^{(2n)}( s_1)=0 
 \;
 \text { for all }
 \; 
 n\ge 0.
 $$ 
Set
$$
\fhat(z)=\ftilde\bigl(s_0+z( s_1-s_0)\bigr),
$$
so that 
 $$
 \fhat^{(2n)}(0)= \fhat^{(2n)}(1)=0
 \;
 \text { for all }
 \;
 n\ge 0.
 $$ 
The exponential types of $f$ and $\fhat$ are related by
$$
\tau(\fhat)=| s_1-s_0|\tau(f).
$$ 
From Proposition \ref{Proposition:LidstoneSine} we deduce that there exists complex numbers $c_1,c_2,\dots,c_L$ such that
$$
\fhat(z)=\sum_{\ell=1}^L c_\ell \sin(\ell\pi z),
$$ 
and therefore
$$
\ftilde(z)= \sum_{\ell=1}^L c_\ell \sin\left(\ell\pi\frac{z-s_0}{ s_1-s_0}\right).
$$ 
Theorem \ref{Th:TwoPointsEven} follows.
\end{proof}

\subsection{Proof of Theorem \ref{Th:TwoPointsEvenExistence}} 

\begin{proof}[Proof of Theorem \ref{Th:TwoPointsEvenExistence}] 
Assume $| s_1-s_0|<\pi$. 
From Proposition \ref{Proposition:LidstoneSine}, it follows that an entire function $f$ of exponential type $\le 1$ for which $f^{(2n)}(s_0)\in\Z$ and $f^{(2n)}( s_1)\in\Z$ for all sufficiently large $n$ is of the form 
 $$
 f(z)=\sum_{n=0}^\infty \left(
f^{(2n)}(s_0)\Lambdatilde_n(z-s_1)+
f^{(2n)}(s_1)\Lambdatilde_n(z-s_0)
\right),
$$ 
and that $f$ is not a polynomial if and only if one at least of the two sets $\{n\ge 0\;\mid\;f^{(2n)}(s_0)\not=0\}$, $\{n\ge 0\;\mid\;f^{(2n)}(s_1)\not=0\}$ is infinite. 
We construct such functions by requiring $f^{(2n)}(s_0)=0$ for all $n\ge 0$ and $f^{(2n)}(s_1)=0$ for all $n\ge 0$ outside a lacunary sequence. 
 
Define, for $k\ge 0$, $N_k=\gamma_4^{2^k-1}$, where $\gamma_4$ is the constant in Corollary \ref{Cor:uperboundLidstoneWhittaker}, so that $N_0=1$ and $N_{k+1}=\gamma_4N_k^2$.
For $n\ge 1$, let $e_n=0$ if $N_k<n<N_{k+1}$, and $e_{N_k} \in\{+1,-1\}$ for $k\ge 0$, so that there is an infinite uncountable set of such lacunary sequences $(e_n)_{n\ge 0}$. Define 
$$
f(z):=\sum_{n\ge 1} e_n\Lambdatilde_n(z-s_0).
$$ 
This function $f$ is entire of order $\le 1$, and $f^{(2n)}(s_0)=0$, $f^{(2n)}(s_1)=e_n$ for all $n\ge 0$. Since infinitely many $e_n$ are not $0$, this function $f$ is transcendental. It remains to check the upper bound for $|f|_r$. 

Let $r$ be a sufficiently large positive number. Let $k$ be the least positive integer such that $N_k>\sqrt {r+|s_0|}$. 
From part (i) of Lemma \ref{Lemma:upperbound|Lambdatilde|}, using the bound $N_{k-1}\le \sqrt {r+|s_0|}\le \sqrt {2r}$, we deduce, for sufficiently large $r$, 
\begin{align}
\notag
\sum_{n< N_k} |e_n| \; |\Lambdatilde_n|_{r+|s_0|}
& 
\le 
\sum_{n\le N_{k-1}} |\Lambdatilde_n|_{r+|s_0|}
\\
\notag
&< 
\gamma_1 \frac {N_{k-1}} { |s_1-s_0|} (2r)^{2N_{k-1}+1}
\\
\notag
&
\le \gamma_1 r^{3\sqrt r }
\\
\notag
&< \frac{\rme^r}{r}\cdotp
\end{align}
Assuming \eqref{equation:majoration|s0-s1|even}, we can use part (ii) of Lemma \ref{Lemma:upperbound|Lambdatilde|} and get
$$
 |\Lambdatilde_{N_k}|_{r+|s_0|}\le \gamma_2 \frac{\rme^{r+|s_0|+1/(4r)}}{\sqrt {2\pi r}}\cdotp
$$
Since $\gamma_4 (r+|s_0|)\le \gamma_4 N_k^2=N_{k+1}$, Corollary \ref{Cor:uperboundLidstoneWhittaker} yields
 $$
\sum_{n> N_k} |e_n| \; |\Lambdatilde_n|_{r+|s_0|}
\le \sum_{n\ge N_{k+1}} \; |\Lambdatilde_n|_{r+|s_0|}<1.
$$ 
Combining these three estimates, we conclude
$$
\limsup_{r\to\infty}\rme^{-r}\sqrt r |f|_r\le \gamma
\text{ with }
\gamma=\gamma_2 \frac{\rme^{|s_0|}}{\sqrt {2\pi}},
$$
which is an explicit version of \eqref{eq:maingrowthconditiongamma}:
$$
\gamma= \frac{\rme^{|s_0|}}{\sqrt {2\pi}} \cdot
\frac{2}{ 4|s_1-s_0|-\rme^{|s_1-s_0|}+\rme^{-|s_1-s_0|}}\cdotp
$$
\end{proof}

\section{Whittaker polynomials}\label{S:Whittaker}

\subsection{Definition and properties}\label{SS:WhittakerPolynomials}

We now consider the set $\scrS=(\{0\}\times (2\N+1))\cup (\{1\}\times 2\N)\subset\{0,1\}\times \N$: we take odd derivatives at $0$ and even derivatives at $1$. The analogs of Lidstone polynomials have been introduced by 
\cite[\S~6 p.~457--458]{zbMATH02543645}, 
and studied by
 \cite{zbMATH03021531}. 
See also 
\cite[Chap.~III \S~4]{zbMATH03416363}. 
 
Following \cite{zbMATH02543645}, 
one defines a sequence $(M_n)_{n\ge 0}$ of even polynomials by induction on $n$ with $M_0=1$, 
$$
M''_n=M_{n-1}, \quad M_n(1)=M'_n(0)=0 \text{ for all }n\ge 1.
$$
For all $n\ge 0$, the polynomial $M_n$ is even of degree $2n$ and leading term $\frac{1}{(2n)!}z^{2n}$. From the definition one deduces 
$$
M_n^{(2k+1)}(0)=0 \hbox{ and } M_n^{(2k)}(1)=\delta_{k,n} \hbox{ for all $n\ge 0$ and $k\ge 0$}.
$$
As a consequence, any polynomial $f\in\C[z]$ has an expansion
\begin{equation}\label{Equation:WhittakerExpansionPolynomials}
f(z)=\sum_{n=0}^\infty 
\left( f^{(2n)}(1) M_n(z)- 
f^{(2n+1)}(0) M'_{n+1}(1-z)\right),
\end{equation}
with only finitely many nonzero terms in the series. 

Applying \eqref{Equation:WhittakerExpansionPolynomials}
to the polynomial $z^{2n} $
yields the following recurrence formula 
\begin{equation} \label{eq:AnalogueCostabile}
M_n(z) =\frac 1 {(2n)!} z^{2n} -
\sum_{h=0}^{n-1}\frac 1 {(2n-2h)!} M_h(z).
 \end{equation}
 For instance 
$$
M_1(z)=\frac 12 (z^2-1),\quad M_2(z)=\frac 1 {24} (z^4-6z^2+5)=\frac 1 {24} (z^2-1)(z^2-5),
$$
$$
M_3(z)=\frac{1}{720} (z^6-15z^4+75z^2-61)=\frac{1}{720} (z^2-1)(z^4-14z^2+61).
$$

Here is the analog of Proposition \ref{Proposition:LidstoneSine} for Whittaker polynomials
 \cite[Th.~2]{zbMATH03021531}: 

\begin{proposition}\label{Proposition:Schoenberg-cosine}
Let $f$ be an entire function of finite exponential type $\tau(f)$ satisfying $f^{(2n+1)}(0)=f^{(2n)}(1)=0$ for all $n\ge 0$. Then there exist complex numbers $c_0,\dots,c_L$ with $(2L+1)\pi/2\le \tau(f)$ such that
$$ 
f(z)= \sum_{\ell=0}^L c_\ell \cos\left(\frac{(2\ell+1)\pi}{2} z \right).
$$
Therefore, if an entire function $f$ has exponential type $<\pi/2$ and satisfies $f^{(2n+1)}(0)=f^{(2n)}(1)=0$ for all $n\ge 0$, then $f=0$. 
 \end{proposition}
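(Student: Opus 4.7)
The plan is to follow the same template as the proof of the Lidstone analog, Proposition \ref{Proposition:LidstoneSine}, substituting Whittaker polynomials $M_n$ for Lidstone polynomials $\Lambda_n$, and cosines at half-integer frequencies for sines at integer frequencies. The functions $\cos\bigl((2\ell+1)\pi z/2\bigr)$, for $\ell\ge 0$, are precisely the eigenfunctions of $-\partial_z^2$ satisfying the Whittaker boundary conditions $u^{(2k+1)}(0)=u^{(2k)}(1)=0$ for all $k\ge 0$, with eigenvalues $((2\ell+1)\pi/2)^2$; they will appear as the exact obstruction to Whittaker interpolation of entire functions of exponential type $\ge \pi/2$.

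First I would establish the generating-function identity, the analog of \eqref{Equation:shcosech},
$$\sum_{n\ge 0} t^{2n} M_n(z) = \frac{\cosh(zt)}{\cosh(t)} \qquad (|t|<\pi/2,\ z\in\C),$$
by observing that the right-hand side, as a function of $z$, satisfies $\partial_z^2 u = t^2 u$, $u(1) = 1$, $\partial_z u(0) = 0$, so matching Taylor coefficients in $t^2$ recovers the defining recursion for the $M_n$. Differentiating in $z$ also yields $\sum_{n\ge 0} t^{2n+2}M'_{n+1}(z) = t\sinh(zt)/\cosh(t)$. Cauchy's residue theorem applied to $|t|=r<\pi/2$ and then deformed outward through the poles $t=\pm i(2\ell+1)\pi/2$ of $1/\cosh t$ gives, for any $L\ge 0$ and $R\in\bigl((2L+1)\pi/2,(2L+3)\pi/2\bigr)$, the analog of \eqref{equation:IntegralFormula}:
$$M_n(z) = \sum_{\ell=0}^L a_{\ell,n}\cos\!\left(\tfrac{(2\ell+1)\pi z}{2}\right) + \frac{1}{2\pi i}\int_{|t|=R}\frac{\cosh(zt)}{t^{2n+1}\cosh(t)}\,\rmd t,$$
with explicit residue weights $a_{\ell,n}$ of order $\bigl((2\ell+1)\pi/2\bigr)^{-2n-1}$, and a parallel formula for $M'_{n+1}(1-z)$ with kernel $\sinh((1-z)t)/(t^{2n+2}\cosh(t))$. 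The same contour estimate also yields a growth bound $|M_n|_r \le \gamma_5 \bigl(2/\pi\bigr)^{2n}\rme^{(\pi/2)r}$ analogous to Lemma \ref{Lemma:upperbound|Lambdatilde|}(iii).

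Given $f$ as in the statement, let $L$ be the largest integer with $(2L+1)\pi/2\le\tau(f)$, and fix $R\in\bigl(\tau(f),(2L+3)\pi/2\bigr)$. Inserting the two integral formulas into the partial Whittaker sum
$$P_N(z):=\sum_{n=0}^N\Bigl(f^{(2n)}(1)M_n(z)-f^{(2n+1)}(0)M'_{n+1}(1-z)\Bigr),$$
which vanishes identically for every $N$ by hypothesis, and swapping the finite sum with the contour integral, one reaches
$$0 = P_N(z) = \sum_{\ell=0}^L c_\ell^{(N)}\cos\!\left(\tfrac{(2\ell+1)\pi z}{2}\right) + \frac{1}{2\pi i}\int_{|t|=R}\frac{\cosh(zt)A_N(t)-\sinh((1-z)t)B_N(t)}{\cosh(t)}\,\rmd t,$$
where $A_N(t)=\sum_{n=0}^N f^{(2n)}(1)t^{-2n-1}$ and $B_N(t)=\sum_{n=0}^N f^{(2n+1)}(0)t^{-2n-2}$; by \eqref{eq:type}, both series converge uniformly on $|t|=R$ to limits $A(t),B(t)$, since $R>\tau(f)$. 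Letting $N\to\infty$, the contour integral tends to a Whittaker--Borel representation of $f(z)$ and the residue sum tends to $\sum_{\ell=0}^L c_\ell\cos\bigl((2\ell+1)\pi z/2\bigr)$, giving the desired expansion. The inequality $(2L+1)\pi/2\le\tau(f)$ is automatic because each non-zero cosine summand contributes exponential type exactly $(2\ell+1)\pi/2$, and the type of the sum is bounded by $\tau(f)$.

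The main obstacle is to verify that the limiting contour integral genuinely reproduces $f(z)$, i.e., to prove a Whittaker--Borel representation theorem for entire functions of exponential type strictly less than $R$. This can be done by first checking the representation on the dense subspace of polynomials, where \eqref{Equation:WhittakerExpansionPolynomials} is a finite sum, and then extending by continuity using the growth bounds on $M_n$ and $M'_{n+1}$ together with \eqref{eq:type}. Once this is in place, the residue calculus at $t=\pm i(2\ell+1)\pi/2$ produces the claimed finite cosine expansion automatically.
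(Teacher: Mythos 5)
Note first that the paper does not prove this proposition: it is quoted from Schoenberg \cite[Th.~2]{zbMATH03021531}, so there is no internal proof to match; your attempt has to stand on its own, and as written it has a genuine gap at its central step. You form the partial Whittaker sums $P_N$ and the auxiliary functions $A_N(t)=\sum_{n=0}^N f^{(2n)}(1)t^{-2n-1}$, $B_N(t)=\sum_{n=0}^N f^{(2n+1)}(0)t^{-2n-2}$ and the residue coefficients $c^{(N)}_\ell$ \emph{entirely out of the two-point data of $f$} --- but by hypothesis every one of these numbers is $0$. Your displayed identity therefore reads $0=0+0$ and carries no information about $f$; in particular the assertion that, as $N\to\infty$, ``the contour integral tends to a Whittaker--Borel representation of $f(z)$'' is unsupported and in fact false: with your definitions the integral tends to $0$, and more fundamentally no representation of the form
$$
f(z)=\frac{1}{2\pi i}\int_{|t|=R}\frac{\cosh(zt)A(t)-\sinh((1-z)t)B(t)}{\cosh t}\,\rmd t
$$
built from the two-point data can hold once $R>\pi/2$, since $f(z)=\cos(\pi z/2)$ has all data equal to zero while $f\neq 0$. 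The functions $\cos\bigl((2\ell+1)\pi z/2\bigr)$ are precisely the obstruction to such a representation, so the ``check on polynomials and extend by density/continuity'' repair you sketch cannot close the gap either: the statement you would be extending is false in the relevant range of types. What is missing is an input that represents $f$ independently of the vanishing data, e.g.\ the Laplace--Borel integral $f(z)=\frac{1}{2\pi i}\int_{|t|=R}\rme^{zt}F(t)\,\rmd t$ with $F$ the Borel transform of $f$ and $R>\tau(f)$, combined with the kernel identity $\rme^{zt}\cosh t=\rme^{t}\cosh(zt)-\sinh\bigl((1-z)t\bigr)$ and an argument (this is the actual content of Schoenberg's theorem) that the vanishing of all moments $\int t^{2n+1}F(t)\,\rmd t$ and $\int t^{2n}\rme^{t}F(t)\,\rmd t$ confines the singularities of $F$ to the zeros of $\cosh$ in $|t|\le\tau(f)$, the residues there producing the finite cosine sum.

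A far more elementary route is available and worth noting. The hypothesis $f^{(2n+1)}(0)=0$ for all $n\ge 0$ says exactly that $f$ is even, and $f^{(2n)}(1)=0$ for all $n\ge 0$ says exactly that $f$ is odd about $1$, i.e.\ $f(1+z)=-f(1-z)$. Combining, $f(z+2)=-f(z)$, so $f$ is entire, $4$-periodic and of finite exponential type; writing $f(z)=g(\rme^{i\pi z/2})$ with $g$ analytic on $\C\setminus\{0\}$ and bounding the Laurent coefficients by the growth of $f$ on vertical lines shows $f$ is a trigonometric polynomial $\sum_{|k|\pi/2\le\tau(f)}a_k\rme^{i\pi kz/2}$, and the two symmetries kill all terms except the odd-frequency cosines, giving precisely $f(z)=\sum_{\ell=0}^{L}c_\ell\cos\bigl((2\ell+1)\pi z/2\bigr)$ with $(2L+1)\pi/2\le\tau(f)$. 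Your closing remark that each nonzero summand has type exactly $(2\ell+1)\pi/2$, so that the bound on $L$ is automatic, is correct; the machinery you set up (generating function \eqref{Equation:coshsech}, the integral formula \eqref{Eq:integraleformulaMn}, growth bounds on $M_n$) is sound and mirrors \S~\ref{S:Whittaker}, but it is not what carries the proof of this proposition.
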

 
 In \eqref{Equation:shcosech}, we considered, for $t\in\C$, $t\not\in i\pi\Z$, the entire function $z\mapsto \frac{\sinh(zt)}{\sinh (t)}$; now we consider, for $t\in\C$, $t\not\in i\frac \pi 2+i\pi\Z$, the entire function 
$$
f(z)=\frac{\cosh(zt)}{\cosh (t)} =\frac{\rme^{zt}+\rme^{-zt}}{\rme^t+\rme^{-t}},
$$ 
which satisfies 
$$
f''=t^2f,\quad f(1)=1,\quad f'(0)=0,
$$
hence $f^{(2n)}(1)=t^{2n}$ and $f^{(2n+1)}(0)=0$ for all $n\ge 0$.
It follows from Proposition \ref{Proposition:Schoenberg-cosine} that the sequence $(M_n)_{n\ge 0}$ is also defined by the expansion
\begin{equation}\label{Equation:coshsech}
\frac{\cosh(zt)}{\cosh (t)} = \sum_{n=0}^\infty t^{2n}M_n(z) 
\end{equation}
for $|t|<\pi/2$ and $z\in\C$. 

Using Cauchy's residue Theorem, we deduce from \eqref{Equation:coshsech} the integral formula
\begin{align}\notag
M_n(z)
=(-1)^n\frac{2^{2n+2} }{\pi^{2n+1} } 
\sum_{s=0}^{S-1} \frac{(-1)^s}{(2s+1)^{2n+1}} 
&\cos \left(\frac{(2s+1)\pi}{2} z\right)
\\
\notag
&
+\frac{1}{2\pi i} \int_{|t|=S\pi} t^{-2n-1} \frac{\cosh(zt)}{\cosh (t)} 
\rmd t
\end{align}
for $S=1,2,\dots$ and $z\in\C$. In particular, with $S=1$ we obtain 
\begin{equation}\label{Eq:integraleformulaMn}
M_n(z)=(-1)^n\frac{2^{2n+2} }{\pi^{2n+1} } 
\cos (\pi z/2 )
+
\frac{1}{2\pi i} \int_{|t|=\pi} t^{-2n-1}\frac{\cosh(zt)}{\cosh (t)} 
\rmd t.
\end{equation}

\subsection{Replacing $0$ and $1$ with $s_0$ and $s_1$}

Let $s_0$ and $s_1$ be two distinct complex numbers. Define, for $n\ge 0$,
 $$
 \Mtilde_n(z)= (s_1-s_0)^{2n} M_n\left( \frac{z}{s_1-s_0}\right).
 $$
This sequence of polynomials is also defined by induction by $\Mtilde_0(z)=1$ and, for $n\ge 1$,
 $$
 \Mtilde_n''=\Mtilde_{n-1}, \quad \Mtilde'_n(0)=\Mtilde_n(s_1-s_0)=0.
 $$
Hence 
 $$
 \Mtilde_n^{(2k+1)}(0)=0 \hbox{ and } \Mtilde_n^{(2k)}(s_1-s_0)=\delta_{k,n} \hbox{ for all $n\ge 0$ and $k\ge 0$}.
 $$
It follows that any polynomial $f\in\C[z]$ has an expansion
$$
f(z)=\sum_{n=0}^\infty 
\left( f^{(2n)}(s_1) \Mtilde_n(z-s_0)
+ 
f^{(2n+1)}(s_0) \Mtilde'_{n+1}(z-s_1)\right),
$$
with only finitely many nonzero terms in the series. 

From \eqref{Equation:WhittakerExpansionPolynomials} we deduce
\begin{equation} \label{equation:CostabiletildeM}
 \Mtilde_n(z) =\frac {z^{2n}} { (2n)!} -\sum_{h=0}^{n-1}\frac {(s_1-s_0)^{2n-2h} }{(2n-2h)!} \Mtilde_{h}(z).
\end{equation}

 Here is the analog of Lemma \ref{Lemma:upperbound|Lambdatilde|} for the sequence of polynomials $\Mtilde_n$: 

\begin{lemma}\label{Lemma:upperbound|Milde|} 
Let $s_0$, $s_1$ be two distinct complex numbers. There exist positive contants $\gamma'_1$, $\gamma'_2$ and $\gamma'_3$ such that the following holds. 
\\
(i) For $r\ge 0$ and $n\ge 0$, we have
 $$
| \Mtilde_n|_r\le \gamma'_1 \frac{ |s_1-s_0|^{2n}} {(2n)!}\max
\left\{ \frac r {|s_1-s_0|}, 2n\right \}^{2n}.
 $$ 
 (ii) Assume \eqref{equation:majoration|s0-s1|OddEven}. 
Then, for sufficiently large $r$ and for all $n\ge 0$,
$$
|\Mtilde_n|_r\le \gamma'_2 \frac {\rme^{r+1/(4r)}}{\sqrt {2\pi r}}\cdotp
$$
(iii) For $r\ge 0$ and $n\ge 0$, 
$$
|\Mtilde_n|_r\le \gamma'_3 \left(\frac{2|s_1-s_0|}{\pi}\right)^{2n} \rme^{\frac {\pi r}{|s_1-s_0|}}.
$$
 \end{lemma}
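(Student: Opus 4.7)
The plan is to mirror the three-step proof of Lemma \ref{Lemma:upperbound|Lambdatilde|} \emph{mutatis mutandis}, using the recurrence \eqref{equation:CostabiletildeM} in place of \eqref{eq:Costabiletilde} and the integral representation \eqref{Eq:integraleformulaMn} in place of \eqref{equation:IntegralFormula}. The main conceptual step — and the only one where anything nontrivial happens — will be the verification in (ii) that the hypothesis \eqref{equation:majoration|s0-s1|OddEven} is exactly what makes the natural induction close.

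For part (i), I would prove by induction on $n$, using \eqref{equation:CostabiletildeM} together with the bound $(2n)!/(2h)! \le (2n)^{2n-2h}$, an estimate
$$
|\Mtilde_n(z)| \le \kappa_n \frac{|s_1-s_0|^{2n}}{(2n)!} \max\left\{\frac{|z|}{|s_1-s_0|},\, 2n\right\}^{2n}
$$
valid for any positive sequence $(\kappa_n)_{n\ge 0}$ with $\kappa_0 \ge 1$ and $\kappa_n \ge 1 + \sum_{h=0}^{n-1} \kappa_h/(2n-2h)!$. A constant choice $\kappa_h = \gamma'_1$ is permissible since $\sum_{\ell \ge 1} 1/(2\ell)! = \cosh(1) - 1 < 1$, giving the explicit value $\gamma'_1 = 2/(3 - \rme - \rme^{-1})$.

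For part (ii), the same induction scheme, combined with Corollary \ref{Cor:Stirling} applied to the leading term $z^{2n}/(2n)!$, produces $|\Mtilde_n|_r \le \kappatilde_n \rme^{r+1/(4r)}/\sqrt{2\pi r}$ for $r$ sufficiently large, where the sequence $(\kappatilde_n)$ satisfies $\kappatilde_n \ge 1 + \sum_{h=0}^{n-1} \kappatilde_h |s_1-s_0|^{2n-2h}/(2n-2h)!$. A constant sequence $\kappatilde_h = \gamma'_2$ is legal precisely when $\sum_{\ell \ge 1} |s_1-s_0|^{2\ell}/(2\ell)! < 1$, i.e.\ when $\cosh(|s_1-s_0|) < 2$. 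This is exactly \eqref{equation:majoration|s0-s1|OddEven} rewritten as $\rme^{|s_1-s_0|} + \rme^{-|s_1-s_0|} < 4$, and it yields $\gamma'_2 = 2/(4 - \rme^{|s_1-s_0|} - \rme^{-|s_1-s_0|})$.

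For part (iii), I would bound the two summands in \eqref{Eq:integraleformulaMn} separately. On $|z|=R$ the first term is majorized by $(2^{2n+2}/\pi^{2n+1})\rme^{\pi R/2}$; on the contour $|t|=\pi$, which avoids the zeros $\pm i\pi/2, \pm 3i\pi/2,\dots$ of $\cosh$, the integrand is bounded by an absolute constant times $\pi^{-2n-1}\rme^{\pi R}$. Both contributions then collapse into a single bound of the form $C (2/\pi)^{2n}\rme^{\pi R}$ (using $\pi^{-2n}\le (2/\pi)^{2n}$). Applying the scaling identity $\Mtilde_n(z) = (s_1-s_0)^{2n} M_n(z/(s_1-s_0))$ with $R = r/|s_1-s_0|$ delivers (iii); the factor $\rme^{\pi r/|s_1-s_0|}$ replacing $\rme^{3\pi r/(2|s_1-s_0|)}$ of the Lidstone case reflects simply the smaller radius $\pi$ of the available contour here.
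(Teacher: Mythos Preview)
Your proposal follows the paper's proof essentially verbatim, including the explicit constants. There is one arithmetic slip in part (i): solving $\gamma'_1 = 1 + \gamma'_1(\cosh 1 - 1)$ gives $\gamma'_1 = 1/(2-\cosh 1) = 2/(4-\rme-\rme^{-1})$, not $2/(3-\rme-\rme^{-1})$ (the latter is negative, since $\rme+\rme^{-1}>3$); otherwise the argument is correct and matches the paper.
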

 
 \begin{proof}
 $\quad$\null
 \\
 (i) Let $(\kappa'_0,\kappa'_1,\kappa'_2,\dots)$ be a sequence of positive numbers satisfying $ \kappa'_0\ge 1$ and, for $n\ge 1$,
$$
\kappa'_n\ge 1+ \sum_{h=0}^{n-1} \frac{\kappa'_h}{(2n-2h)!} \cdotp
$$
By induction we prove the estimate, for $z\in\C$, 
\begin{equation}\label{equation:cprime}
| \Mtilde_n(z)|\le \kappa'_n \frac{ |s_1-s_0|^{2n}} {(2n)!}\max
\left\{ \frac {|z|} {|s_1-s_0|}, 2n\right \}^{2n}.
\end{equation}
This is true for $n=0$ (and $z\not=0$). Assume that, for some $n\ge 1$, \eqref{equation:cprime} is true for $n$ replaced with $h$ and all $h=0,1,\dots,n-1$. Then for $0\le h\le n-1$ we have 
$$
|\Mtilde_h(z)|
\le
\kappa'_{h}
 \frac{ |s_1-s_0|^{2h}} {(2h)!}\max
\left\{ \frac {|z|} {|s_1-s_0|}, 2n\right \}^{2h}.
$$
We use the upper bound 
$$
\frac{(2n)!}{(2h)!}\le (2n)^{2n-2h}.
$$
We deduce, for $0\le h\le n-1$,
$$
|s_1-s_0|^{2n-2h}
|\Mtilde_h(z)|
\le
\kappa'_{h}
 \frac{ |s_1-s_0|^{2n}} {(2n)!}\max
\left\{ \frac {|z|} {|s_1-s_0|}, 2n\right \}^{2n}.
$$
Now \eqref{equation:CostabiletildeM} implies
$$
| \Mtilde_n(z)|\le
\left(
1+
\sum_{h=0}^{n-1} \frac{ \kappa'_h }{(2n-2h)!} \right)
 \frac{ |s_1-s_0|^{2n}} {(2n)!}\max
\left\{ \frac {|z|} {|s_1-s_0|}, 2n\right \}^{2n},
$$
which proves \eqref{equation:cprime}.

We deduce part (i) of Lemma \ref{Lemma:upperbound|Milde|} by taking for the sequence $(\kappa'_h)_{h\ge 0}$ a constant sequence $\kappa'_h=\gamma'_1$ with
$$
\gamma'_1=1+\gamma'_1\sum_{\ell\ge 1} \frac{1}{(2\ell)!}\cdotp
$$
This proves (i) with the explicit value
$$
\gamma'_1=\frac{2}{ 4-\rme -\rme^{-1}}=2.188\, 569\, 9\dots
$$

\medskip\noindent
(ii)
Fix $r$ sufficiently large. 
Let $(\kappatilde'_n)_{n\ge 0}$ be another sequence satisfying $ \kappatilde'_0> 0$ and, for $n\ge 1$,
\begin{equation}\label{eq:kappatildeprime}
\kappatilde'_n\ge 1+ \sum_{h=0}^{n-1} \kappatilde'_h \frac{|s_1-s_0|^{2n-2h}}{(2n-2h)!} \cdotp
\end{equation}
We prove the estimate 
\begin{equation}\label{equation:gammaprime}
| \Mtilde_n|_r\le \kappatilde'_n \frac{ \rme^{r+(1/4r)}}{\sqrt {2\pi r}} \cdotp
\end{equation}
This is true for $n=0$, since $r$ is sufficiently large and $\kappatilde'_0> 0$.
Assume that, for some $n\ge 1$, \eqref{equation:gammaprime} is true for all $h$ with $0\le h<n$. Using the induction hypothesis with \eqref{equation:CostabiletildeM}, we obtain
 $$
 | \Mtilde_n|_r\le \frac{r^{2n}}{ (2n)!} +
 \frac{ \rme^{r+(1/4r)}}{\sqrt {2\pi r}} 
 \sum_{h=0}^{n-1} \kappatilde'_h \frac{|s_1-s_0|^{2n-2h}}{(2n-2h)!}\cdotp
$$
Now \eqref{equation:gammaprime} follows from \eqref{eq:kappatildeprime} and Corollary \ref{Cor:Stirling}. 
We take for the sequence $(\kappatilde'_h)_{h\ge 0}$ a constant sequence $\kappatilde'_h=\gamma'_2$ with
$$
\gamma'_2= 1+\gamma'_2\sum_{\ell\ge 1} \frac{|s_1-s_0|^{2\ell}}{(2\ell)!}\cdotp
$$
Since \eqref{equation:majoration|s0-s1|OddEven} can be written
$$
\rme^{|s_1-s_0|}+ \rme^{-|s_1-s_0|}<4,
$$
this implies part (ii) of Lemma \ref{Lemma:upperbound|Milde|} with
$$
\gamma'_2=
\frac{2}{ 4-\rme^{|s_1-s_0|}-\rme^{-|s_1-s_0|}}, 
$$
provided that $r$ is sufficiently large. 

\medskip\noindent
(iii) 
From the integral formula \eqref{Eq:integraleformulaMn} one deduces the upper bound: 
$$
|M_n|_r\le 
\left(\frac{2}{\pi}\right)^{2n} \rme^{\pi r}
\left(\frac{4}{\pi}\rme^{-\pi r/2}+
 2^{-2n+1} 
\sup_{|t|=\pi} \frac{1}{ |\rme^t+\rme^{-t}|} 
\right).
$$ 
The proof of Lemma \ref{Lemma:upperbound|Milde|} is complete.
 \end{proof}
 
From part (iii) of Lemma \ref{Lemma:upperbound|Milde|} we deduce the following. 

\begin{corollary}\label{Cor:uperboundMtilde}
Assume $|s_1-s_0|<\pi/2$. There exists a constant $\gamma'_4>0$ such that, for $r$ sufficiently large, 
$$
\sum_{n\ge \gamma'_4 r}|\Mtilde_n|_r<1.
$$
\end{corollary}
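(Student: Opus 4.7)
The argument parallels that of Corollary \ref{Cor:uperboundLidstoneWhittaker}. The plan is to apply part (iii) of Lemma \ref{Lemma:upperbound|Milde|}, which gives the geometric-type bound
$$
|\Mtilde_n|_r\le \gamma'_3 \left(\frac{2|s_1-s_0|}{\pi}\right)^{2n} \rme^{\frac{\pi r}{|s_1-s_0|}},
$$
and to exploit the assumption $|s_1-s_0|<\pi/2$, which forces the ratio $q:=(2|s_1-s_0|/\pi)^2$ to be strictly less than $1$.

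First, summing the geometric tail starting at an integer $N\ge 0$, one gets
$$
\sum_{n\ge N}|\Mtilde_n|_r \le \frac{\gamma'_3}{1-q}\,\rme^{\frac{\pi r}{|s_1-s_0|}}\left(\frac{2|s_1-s_0|}{\pi}\right)^{2N}.
$$
Taking logarithms, the right-hand side is $<1$ as soon as
$$
\frac{\pi r}{|s_1-s_0|}+\log\frac{\gamma'_3}{1-q}<2N\log\frac{\pi}{2|s_1-s_0|},
$$
which, since $\log(\pi/(2|s_1-s_0|))>0$, holds for all sufficiently large $r$ whenever $N\ge \gamma'_4 r$ and
$$
\gamma'_4>\frac{\pi}{2|s_1-s_0|\bigl(\log \pi-\log (2|s_1-s_0|)\bigr)}\cdotp
$$
Fixing any such $\gamma'_4$ yields the claim.

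There is essentially no obstacle here: the only ingredients are the integral-formula bound of Lemma \ref{Lemma:upperbound|Milde|}(iii) and the elementary inequality $|s_1-s_0|<\pi/2$, which makes the geometric series convergent and its tail decay faster than the exponential $\rme^{\pi r/|s_1-s_0|}$ once $N$ grows linearly in $r$. As in the Lidstone case, the assumption $|s_1-s_0|<\pi/2$ cannot be relaxed, since \eqref{Equation:coshsech} shows that the radius of convergence of $\sum t^{2n}M_n(z)$ is exactly $\pi/2$, so for $|s_1-s_0|\ge\pi/2$ the terms $|\Mtilde_n|_r$ do not tend to $0$ fast enough to allow such a tail bound.
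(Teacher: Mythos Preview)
Your proof is correct and follows essentially the same route as the paper's: apply part (iii) of Lemma \ref{Lemma:upperbound|Milde|}, sum the resulting geometric tail, and take logarithms to extract the linear threshold $N\ge \gamma'_4 r$. Your constant $\gamma'_3/(1-q)$ is exactly the paper's $\gamma'_3\pi^2/(\pi^2-4|s_1-s_0|^2)$, and the lower bound you obtain for $\gamma'_4$ coincides with the paper's.
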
 

From \eqref{Equation:coshsech} it follows that the assumption $|s_1-s_0|<\pi/2$ cannot be relaxed. 

\begin{proof}[Proof of Corollary \ref{Cor:uperboundMtilde}] 
Let $N$ be a positive integer. From part (iii) of Lemma \ref{Lemma:upperbound|Milde|} we deduce 
\begin{align}\notag
\sum_{n\ge N}|\Mtilde_n|_r&\le \gamma'_3 
\rme^{\frac {\pi r}{|s_1-s_0|}}\sum_{n\ge N} \left(\frac{2|s_1-s_0|}{\pi}\right)^{2n} 
\\
\notag
&=
\frac{\gamma'_3\pi^2}{\pi^2-4|s_1-s_0|^2}\rme^{\frac {\pi r}{|s_1-s_0|}}\left(\frac{2|s_1-s_0|}{\pi}\right)^{2N}\cdotp
\end{align}
The right hand side is $<1$ as soon as 
$$
\frac {\pi r}{|s_1-s_0|} +\log \frac{\gamma'_3\pi^2}{\pi^2-4|s_1-s_0|^2}<2N\log \frac{\pi}{2|s_1-s_0|},
$$
and this is true for $r$ sufficiently large and $N\ge \gamma'_4r$, provided that 
$$
\gamma'_4>\frac{\pi}{2|s_1-s_0|(\log \pi - \log (2|s_1-s_0|))}\cdotp
$$ 
 \end{proof}
 
\section{Derivatives of odd order at one point and even at the other}\label{S:OddEven}

\subsection{Proof of Theorem \ref{Th:TwoPointsOddEven} } \label{SS:twopointsOddEven}

\begin{proof}[Proof of Theorem \ref{Th:TwoPointsOddEven}] 
Let $f$ satisfy the assumptions of Theorem \ref{Th:TwoPointsOddEven}.
Using the assumption \eqref{eq:maingrowthcondition}, we deduce from
Corollary \ref{Corollary:Polya} that the sets
$$ 
\{ n\ge 0\; \mid \; f^{(2n+1)}(s_0)\not=0\}
\;
\text{ and } 
\;
\{ n\ge 0\; \mid \; f^{(2n)}( s_1)\not=0\}
$$ 
are finite. Hence
$$
P(z)=
\sum_{n=0}^\infty 
\left( f^{(2n)}(s_1) \Mtilde_n(z-s_0)
+ 
f^{(2n+1)}(s_0) \Mtilde'_{n+1}(z-s_1)\right)
$$
is a polynomial satisfying 
$$
P^{(2n+1)}(s_0)=f^{(2n+1)}(s_0)
\;
\text{ and }
\;
P^{(2n)}( s_1)=f^{(2n)}( s_1)
\;
\text{ for all }
\;
 n\ge 0.
$$
The function $\ftilde(z)=f(z)-P(z)$ has the same exponential type as $f$ and satisfies 
 $$
 \ftilde^{(2n+1)}(s_0)= \ftilde^{(2n)}( s_1)=0
 \;
 \text { for all }
 \;
 n\ge 0.
 $$ 
Set
$$
\fhat(z)=\ftilde\bigl(s_0+z( s_1-s_0)\bigr),
$$
so that 
 $$
 \fhat^{(2n+1)}(0)= \fhat^{(2n)}(1)=0 
 \;
 \text { for all }
 \;
 n\ge 0.
 $$ 
The exponential types of $f$ and $\fhat$ are related by
$$
\tau(\fhat)=| s_1-s_0|\tau(f).
$$ 
From Proposition \ref{Proposition:Schoenberg-cosine} we deduce that there exists complex numbers $c_0,c_1,\dots,c_L$ 
with $(2L+1)\pi/2\le \tau(\fhat)$
such that
$$
\fhat(z)= \sum_{\ell=0}^L c_\ell \cos\left(\frac{(2\ell+1)\pi}{2} z \right), 
$$ 
and therefore
$$
\ftilde(z)= \sum_{\ell=0}^L c_\ell \cos\left(\frac{(2\ell+1)\pi}{2}\cdot \frac{z-s_0}{ s_1-s_0}\right).
$$ 
Theorem \ref{Th:TwoPointsOddEven} follows.
\end{proof}

\subsection{Proof of Theorem \ref{Th:TwoPointsOddEvenExistence}}

\begin{proof}[Proof of Theorem \ref{Th:TwoPointsOddEvenExistence}] 
Assume \eqref{equation:majoration|s0-s1|OddEven}. Define, for $k\ge 0$, $N_k=(\gamma'_4)^{2^k-1}$, where $\gamma'_4$ is the constant in Corollary \ref{Cor:uperboundMtilde}, so that $N_0=1$ and $N_{k+1}=\gamma'_4N_k^2$.
For $n\ge 1$, let $e_n=0$ if $N_k<n<N_{k+1}$, and $e_{N_k} \in\{+1,-1\}$ for $k\ge 0$, so that there is an infinite uncountable set of such lacunary sequences $(e_n)_{n\ge 0}$. Define 
$$
f(z):=\sum_{n\ge 1} e_n\Mtilde_n(z-s_0).
$$ 
Let us check the upper bound for $|f|_r$.

Let $r$ be a sufficiently large positive number. Let $k$ be the least positive integer such that $N_k>\sqrt {r+|s_0|}$. 
From part (i) of Lemma \ref{Lemma:upperbound|Milde|}, using the bound $N_{k-1}\le \sqrt {r+|s_0|}\le \sqrt {2r}$, we deduce, for sufficiently large $r$, 
\begin{align}
\notag
\sum_{n< N_k} |e_n| \; |\Mtilde_n|_{r+|s_0|}
& 
\le 
\sum_{n\le N_{k-1}} |\Mtilde_n|_{r+|s_0|}
\\
\notag
&< 
\gamma'_1 N_{k-1} (2r)^{2N_{k-1}}
\\
\notag
&
\le \gamma'_1 r^{3\sqrt r }
\\
\notag
&< \frac{\rme^r}{r}\cdotp
\end{align}
Assuming \eqref{equation:majoration|s0-s1|OddEven}, we can use part (ii) of Lemma \ref{Lemma:upperbound|Milde|} and get
$$
 |\Mtilde_{N_k}|_{r+|s_0|}\le \gamma'_2 \frac{\rme^{r+|s_0|+1/(4r)}}{\sqrt {2\pi r}}\cdotp
$$
Since $\gamma'_4 (r+|s_0|)< \gamma'_4 N_k^2=N_{k+1}$, Corollary \ref{Cor:uperboundMtilde} yields
 $$
\sum_{n> N_k} |e_n| \; |\Mtilde_n|_{r+|s_0|}
\le \sum_{n\ge N_{k+1}} \; |\Mtilde_n|_{r+|s_0|}<1.
$$ 
Combining these three estimates, we conclude
$$
\limsup_{r\to\infty}\rme^{-r}\sqrt r |f|_r\le \gamma'
\text{ with }
\gamma'=\gamma'_2 \frac{\rme^{|s_0|}}{\sqrt {2\pi}},
$$
which is an explicit version of \eqref{eq:maingrowthconditiongammaprime}:
$$
\gamma'= \frac{\rme^{|s_0|}}{\sqrt {2\pi}} \cdot\frac{2}{ 4-\rme^{|s_1-s_0|}-\rme^{-|s_1-s_0|}}\cdotp
$$
We deduce that $f$ has order $\le 1$ and that $f^{(2n+1)}(s_0)=0$, $f^{(2n)}(s_1)=e_n$ for all $n\ge 0$.. 
 \end{proof}
 
\section{Sequence of derivatives} 
\label{S:SequencesDerivatives}

 \subsection{Proofs of Theorem \ref{Th:GontcharoffMacintyre}
 and Proposition \ref{Prop:GontcharoffMacintyre}}\label{SS:GontcharoffMacintyre}

 The proof of Theorem \ref{Th:GontcharoffMacintyre} 
 relies on the following result of 
 \cite[Chapter IV, \S~9]{zbMATH02563461} 
and 
 \cite[\S~4]{MR0059366}. 
 See also
 \cite[Chap.~III]{zbMATH02532117} 
and
\cite[Chap.~3]{zbMATH03416363}. 

\begin{proposition}\label{Proposition:GontcharoffMacintyre}
Let $\sigma_0,\sigma_1,\dots,\sigma_{m-1}$ be complex numbers. 
If $f$ is an entire function of exponential type $<\tau$ satisfying 
$$
f^{(mn+j)}(\sigma_j)=0 \text{ for } j=0,\dots,m-1 \text{ and all sufficiently large } n,
$$
then $f$ is a polynomial
\end{proposition}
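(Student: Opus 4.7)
The plan is to reduce the statement to the classical Gontcharoff--Macintyre interpolation theorem for periodic nodes. First I would choose $N$ large enough that $f^{(mn+j)}(\sigma_j)=0$ for every $n \ge N$ and every $j \in \{0,1,\dots,m-1\}$, and set $g = f^{(mN)}$. Differentiation does not increase the exponential type, so $\tau(g) \le \tau(f) < \tau$, and reindexing turns the vanishing condition into $g^{(mn+j)}(\sigma_j)=f^{(m(n+N)+j)}(\sigma_j)=0$ for \emph{every} $n \ge 0$ and every $j$; equivalently $g^{(k)}(\sigma_{k \bmod m}) = 0$ for all $k \ge 0$. If we can show $g \equiv 0$, then $f$ is a polynomial of degree $<mN$ and the proof is complete.

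The next step is to introduce the Gontcharoff--Macintyre interpolation polynomials $(G_k)_{k \ge 0}$ attached to the periodic node sequence $(\sigma_{k \bmod m})_{k \ge 0}$: set $G_0 = 1$ and define $G_{k+1}$ inductively as the unique primitive of $G_k$ vanishing at $\sigma_{(k+1) \bmod m}$. Then $G_k$ is the unique polynomial of degree $k$ satisfying $G_k^{(i)}(\sigma_{i \bmod m}) = \delta_{i,k}$ for $0 \le i \le k$, and every polynomial $P$ admits the finite expansion $P(z) = \sum_k P^{(k)}(\sigma_{k \bmod m}) G_k(z)$.

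The key analytic input, due to Gontcharoff and Macintyre, is that the critical exponential type below which the series
$$
h(z) = \sum_{k\ge 0} h^{(k)}(\sigma_{k \bmod m}) G_k(z)
$$
converges uniformly on compacta, for \emph{every} entire function $h$, is exactly $\tau$, the smallest modulus of a zero of $\Delta(t)$. The link with $\Delta(t)$ is natural: the homogeneous interpolation problem admits the explicit nontrivial exponential-sum solutions $\sum_i c_i \rme^{\zeta^i \alpha z}$ of Proposition \ref{Prop:GontcharoffMacintyre}(a) precisely when $\Delta(\alpha)=0$, so the zeros of $\Delta$ are the genuine obstructions and $\tau$ is the natural radius up to which an $h$ can still be recovered from its periodic derivative data. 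Granting the convergence theorem and applying it to our function $g$ of exponential type $<\tau$, every coefficient of the expansion vanishes and we conclude $g \equiv 0$, as required.

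The main obstacle is the convergence statement itself: one must prove an estimate of the shape $|G_k|_r \le C_{\varepsilon,r}\, (\tau-\varepsilon)^{-k}$ for every $\varepsilon > 0$, so that the series converges for every entire function of exponential type $<\tau$. The natural route is a contour-integral representation of $G_k$ on a circle $|t| = \tau - \varepsilon$, where $1/\Delta(t)$ is analytic; this is precisely the content of the cited theorems of Gontcharoff \cite{zbMATH02563461} and Macintyre \cite{MR0059366}, which the paper uses as black boxes. The contribution of our argument is the reduction above, which converts the hypothesis \emph{vanishing for all sufficiently large $n$} into \emph{vanishing for all $n \ge 0$} by passing to the high derivative $g=f^{(mN)}$.
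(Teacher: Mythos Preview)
The paper does not prove this proposition at all: it is stated as a known result of Gontcharoff \cite[Chapter IV, \S~9]{zbMATH02563461} and Macintyre \cite[\S~4]{MR0059366} and used as a black box in the proof of Theorem \ref{Th:GontcharoffMacintyre}. Your proposal is a correct sketch of how the argument goes, and you yourself identify that the substantive analytic step --- the growth estimate $|G_k|_r \le C_{\varepsilon,r}(\tau-\varepsilon)^{-k}$ coming from a contour-integral representation with $1/\Delta(t)$ --- is exactly what those references supply. Your reduction from ``sufficiently large $n$'' to ``all $n\ge 0$'' via $g=f^{(mN)}$ is standard and correct (differentiation does not increase exponential type, and $g\equiv 0$ forces $f$ to be a polynomial), so there is no gap: your outline and the paper agree, both deferring the heart of the matter to the cited literature.
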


\begin{proof}[Proof of Theorem \ref{Th:GontcharoffMacintyre}]
Using \eqref{eq:maingrowthcondition} and Corollary \ref{Corollary:Polya}, we deduce from the assumptions of Theorem \ref{Th:GontcharoffMacintyre} 
that 
$$
f^{(mn+j)}(\sigma_j) =0
$$
for all sufficiently large $n$. 
It follows from Proposition \ref{Proposition:GontcharoffMacintyre} and the assumption $\tau(f)<\tau$ that $f(z)$ is a polynomial. 
\end{proof}

\begin{proof}[Proof of Proposition \ref{Prop:GontcharoffMacintyre}]
{\rm (a)} Assume $\Delta(\alpha)=0$: the $m\times m$ matrix 
$$
\Bigl(
\zeta^{k\ell}\rme^{\zeta^k\alpha\sigma_\ell} 
\Bigr)_{0\le k,\ell\le m-1}
$$
has rank $<m$. There exists $c_0,c_1,\dots,c_{m-1}$ in $\C$, not all zero, such that the function 
 $$
 f(z)=c_0\rme^{\alpha z}+c_1\rme^{\zeta\alpha z}+\cdots+c_{m-1}\rme^{\zeta^{m-1}\alpha z}
 $$
 satisfies 
 $$
f^{(j)}(\sigma_j) =0 \text{ for $j=0,1,\dots,m-1$}.
$$
Since $f^{(m)}(z)=\alpha^m f(z)$, one deduces
$$
f^{(mn+j)}(\sigma_j) =0 \text{ for $j=0,1,\dots,m-1$ and $n\ge 0$}.
$$ 
{\rm (b)}
 Assume $\Delta(1)\not=0$.
 For $j=0,1,\dots,m-1$, there exists a unique $m$--tuple of complex numbers $(c_{j0},c_{j1},\dots,c_{j,m-1})$ such that the function 
$$
\varphi_j(z)=\sum_{k=0}^{m-1} c_{jk} \rme^{\zeta^k z}
$$ 
satisfies
$$
\varphi_j^{(\ell)}(\sigma_\ell)= \delta_{j\ell}\quad \hbox{for}\quad 0\le \ell\le m-1.
$$
For $j=0,1,\dots,m-1$, the function $\varphi_j(z)$ has exponential type $1$ and is a solution of the differential equation $\varphi_j^{(m)}=\varphi_j$. 
Let $a_0,a_1,\dots,a_{m-1}$ in $\C$. Define 
$$
f(z)=a_0\varphi_0(z)+a_1\varphi_1(z)+\cdots+a_{m-1}\varphi_{m-1}(z).
$$
We have
 $$
f^{(mn+j)}(\sigma_j) = a_j \text{ for $j=0,1,\dots,m-1$ and $n\ge 0$}.
$$
Assume now $\tau>1$: according to Proposition \ref{Proposition:GontcharoffMacintyre}, for $a_0=a_1=\cdots=a_{m-1}=0$, the unique solution of exponential type $<\tau$ is $f=0$. The unicity follows. 
\end{proof}

\begin{proof}[Proof of Corollary \ref{Cor:GontcharoffMacintyre}]
In case $\sigma_0=1$, $\sigma_1=\sigma_2=\cdots=\sigma_{m-1}=0$, the determinant $\Delta(t)$ is 
$$
\det
\begin{pmatrix}
 \rme^{t} & 1 & 1& \cdots & 1 
\\
\rme^{ \zeta t} &\zeta &\zeta^2 & \cdots & \zeta^{m-1} 
\\
 \vdots&\vdots&\ddots&\vdots &\vdots 
 \\
\rme^{\zeta^{m-1} t} &
\zeta^{m-1} &\zeta^{2(m-1)} & \cdots & \zeta^{(m-1)^2} 
\end{pmatrix}.
$$ 
This determinant is invariant under the transformation $t\mapsto\zeta t$; hence $\Delta(t) $ is a nonzero constant times 
$$
\rme^t +\rme^{\zeta t}+\cdots+\rme^{\zeta^{m-1}t} =
m\sum_{n\ge 0} 
\frac{t^{nm}}{(nm)!} \cdotp
$$
 Now Corollary \ref{Cor:GontcharoffMacintyre} follows from Theorem \ref{Th:GontcharoffMacintyre} with $\tau=\tau_m/|s_1-s_0|$. 
 \end{proof}
 
As pointed out by
 \cite[p.~12]{MR0059366}, 
a special case of the results of 
 \cite{zbMATH02563461} 
is that an entire function of exponential type $<\tau_m$ satisfying 
$$
f^{(n)}(0) =0 \text{ for $n\equiv 0 \bmod m$ }
\text{ and }\;
f^{(n)}(1)=0 \text{ for $n\not \equiv 0 \bmod m$}
$$ 
is a polynomial. 
A.J.~Macintyre remarks that $\tau_m$ is approximately $m/e$ when $m$ is large; he suggests an analogy with Taylor's series which may be considered as the limiting case with $m=\infty$. 
For Corollary \ref{Cor:GontcharoffMacintyre}, when $m$ is large, the assumption \eqref{eq:maingrowthcondition} implies the assumption on $\tau(f)$. Hence Proposition \ref{Prop:Polya} can be viewed as the limiting case of Corollary \ref{Cor:GontcharoffMacintyre}.

 \subsection{Proof of Theorem \ref{Th:2PointsAlln}}\label{SS:Whittaker}
 
 The proof of Theorem \ref{Th:2PointsAlln} relies on the following result \cite[Corollary of Theorem 7 p.~468]{zbMATH02543645}: 
 
\begin{proposition}\label{Proposition:Whittaker}
If an entire function $f$ of exponential type $\tau(f) <1$ satisfies
$$
 f^{(n)}(0)f^{(n)}(1)=0
$$ 
for all sufficiently large $n$, then $f$ is a polynomial. 
\end{proposition}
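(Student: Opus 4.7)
The plan is to deduce the statement from the Abel--Gontcharoff interpolation formula at a sequence of nodes $(z_n)_{n\ge 0}$ in $\{0,1\}$ chosen so as to exploit the product hypothesis, then to invoke a sharp remainder estimate specific to binary node sequences.

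First, I would reduce to a nondegenerate case. If the set $\{n : f^{(n)}(0)\ne 0\}$ is finite, then $f^{(n)}(0)=0$ for all $n$ large, so the Taylor expansion of $f$ at $0$ truncates and $f$ is already a polynomial; symmetrically if $\{n : f^{(n)}(1)\ne 0\}$ is finite. In the remaining case both sets are infinite, so the hypothesis $f^{(n)}(0)f^{(n)}(1)=0$ for $n\ge N_0$ partitions $[N_0,\infty)$ into two disjoint infinite subsets $A=\{n\ge N_0 : f^{(n)}(0)=0\}$ and $B=\{n\ge N_0 : f^{(n)}(1)=0\}$.

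Next I would set up the interpolation formula. For each $n\ge N_0$, pick $z_n\in\{0,1\}$ so that $f^{(n)}(z_n)=0$ (possible by the case analysis above), and for $n<N_0$ pick $z_n\in\{0,1\}$ arbitrarily. Let $G_n$ be the Gontcharoff polynomial of degree $n$ attached to $(z_0,\dots,z_{n-1})$, defined by the nested integral
$$
G_n(z)=\int_{z_0}^{z}\int_{z_1}^{t_1}\cdots\int_{z_{n-1}}^{t_{n-1}} \rmd t_n\,\rmd t_{n-1}\cdots \rmd t_1,
$$
so that $G_n^{(k)}(z_k)=\delta_{k,n}$ for $0\le k\le n$. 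Iterated integration by parts gives, for every $N\ge 1$ and every $z\in\C$,
$$
f(z)=\sum_{n=0}^{N-1} f^{(n)}(z_n)\,G_n(z) + R_N(z), \qquad R_N(z)=\int_{z_0}^{z}\int_{z_1}^{t_1}\cdots\int_{z_{N-1}}^{t_{N-1}} f^{(N)}(t_N)\,\rmd t_N\cdots \rmd t_1.
$$
For $N\ge N_0$ every term in the finite sum with index $\ge N_0$ vanishes by construction, and the sum collapses to the fixed polynomial $P(z)=\sum_{n=0}^{N_0-1} f^{(n)}(z_n) G_n(z)$. Consequently $R_N(z)=f(z)-P(z)$ is independent of $N\ge N_0$, and it suffices to show $R_N(z)\to 0$ pointwise as $N\to\infty$; this forces $f=P$, a polynomial.

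Finally, one must bound the remainder under the hypothesis $\tau(f)<1$. By induction on $k$, every $t_k$ in the integration region lies on the segment from $z_{k-1}\in\{0,1\}$ to $t_{k-1}$, hence $|t_k|\le M:=\max(|z|,1)$, and $|f^{(N)}(t_N)|\le |f^{(N)}|_M$. The crux, due to Whittaker, is a sharp combinatorial estimate exploiting the binary structure of $(z_n)$: runs of consecutive equal $z_n$ collapse the corresponding block of integrations into an ordinary Taylor simplex (producing factorial savings), while the number of transitions between $0$ and $1$ is controlled to give $|R_N(z)|\le C(z)\,|f^{(N)}|_M\,\rho^N$ with $\rho<1$ as soon as $\tau(f)<1$. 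Combined with the alternative characterization \eqref{eq:type} of the exponential type, which together with Cauchy's inequalities yields $\limsup_{N\to\infty}|f^{(N)}|_M^{1/N}=\tau(f)$, this forces $R_N(z)\to 0$, completing the proof.

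\textbf{Main obstacle.} The principal difficulty is the sharp bound on $R_N(z)$ for binary node sequences. For a generic sequence of nodes in the interval $[0,1]$, the Gontcharoff series converges only for exponential type below the Whittaker constant $W\approx 0.7378<1$; improving the threshold to the optimal value $1$ in the binary case requires a delicate combinatorial analysis of the nested integration domain, grouping the sequence $(z_n)$ into constant blocks and carefully accounting for the number of transitions. This refined analysis, which is the technical heart of Theorem~7 of \cite{zbMATH02543645}, is precisely the place where the hypothesis $\tau(f)<1$ is essentially used.
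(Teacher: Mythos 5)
The paper offers no proof of this proposition: it is quoted verbatim as the Corollary of Theorem~7, p.~468 of \cite{zbMATH02543645}, which is exactly the result your sketch ultimately defers to for the remainder estimate on binary node sequences. Your Abel--Gontcharoff framework and the reduction to a vanishing sequence $f^{(n)}(z_n)=0$ with $z_n\in\{0,1\}$ are sound (the claim that $A$ and $B$ are disjoint is false but immaterial, and the constant $\rho$ in your remainder bound should not depend on $\tau(f)$), so your proposal is consistent with, and no less complete than, the paper's treatment, which simply cites Whittaker.
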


As pointed out in a note added in proof of \cite[p.~469]{zbMATH02543645}, 
 \cite{zbMATH02563461} 
 proved this result earlier under the stronger assumption $\tau(f)<1/e$. 
 
\begin{proof}[Proof of Theorem \ref{Th:2PointsAlln}] 
Since $f$ satisfies \eqref{eq:maingrowthcondition}, the assumption of Corollary \ref{Corollary:Polya} is satisfied, hence
$|f^{(n)}(s_j)|<1$ for $n$ sufficiently large and $j=0,1$. Let $n$ be sufficiently large. 
One at least of the three numbers $ f^{(n)}(s_0)$ $ f^{(n)}(s_1)$, $ f^{(n)}(s_0) f^{(n)}(s_1)$ is an integer of absolute value less than $1$, hence it vanishes and therefore the product $ f^{(n)}(s_0) f^{(n)}(s_1)$ vanishes. We apply Proposition \ref{Proposition:Whittaker} to the function 
$$
\fhat(z)= f\bigl( s_0+z( s_1-s_0)\bigr), 
$$
the exponential type of which is $| s_1-s_0|\tau(f)<1$. 

This completes the proof of Theorem \ref{Th:2PointsAlln}.
\end{proof}
 

\begin{thebibliography}{}

\bibitem[Boas and Buck, 1964]{MR0162914}
Boas, Jr., R.~P. and Buck, R.~C. (1964).
\newblock {\em Polynomial expansions of analytic functions}.
\newblock Second printing, corrected. Ergebnisse der Mathematik und ihrer
  Grenzgebiete, N.F., Bd. 19. Academic Press, Inc., Publishers, New York;
  Springer-Verlag, Berlin.

\bibitem[Buck, 1948]{MR0029985}
Buck, R.~C. (1948).
\newblock Interpolation series.
\newblock {\em Trans. Amer. Math. Soc.}, 64:283--298.

\bibitem[Buck, 1955]{MR0072947}
Buck, R.~C. (1955).
\newblock On {$n$}-point expansions of entire functions.
\newblock {\em Proc. Amer. Math. Soc.}, 6:793--796.

\bibitem[Costabile et~al., 2018]{MR3849168}
Costabile, F.~A., Gualtieri, M.~I., Napoli, A., and Altomare, M. (2018).
\newblock Odd and even {L}idstone-type polynomial sequences. {P}art 1: basic
  topics.
\newblock {\em Adv. Difference Equ.}, pages Paper No. 299, 26.

\bibitem[Costabile and Serpe, 2007]{MR2303366}
Costabile, F.~A. and Serpe, A. (2007).
\newblock An algebraic approach to {L}idstone polynomials.
\newblock {\em Appl. Math. Lett.}, 20(4):387--390.

\bibitem[{Gel'fond}, 1952]{zbMATH03416363}
{Gel'fond}, A.~O. (1952).
\newblock {Calculus of finite differences. Authorized English translation of
  the 3rd Russian edition.}
\newblock {International Monographs on Advanced Mathematics and Physics. Delhi,
  India: Hindustan Publishing Corporation. VI,451 p. (1971).}

\bibitem[{Gontcharoff}, 1930]{zbMATH02563461}
{Gontcharoff}, W. (1930).
\newblock {Recherches sur les d\'eriv\'ees successives des fonctions
  analytiques. G\'en\'eralisation de la s\'erie d'Abel.}
\newblock {\em {Ann. Sci. \'Ec. Norm. Sup\'er. (3)}}, 47:1--78.

\bibitem[{Lidstone}, 1930]{zbMATH02567100}
{Lidstone}, G.~J. (1930).
\newblock {Notes on the extension of Aitken's theorem (for polynomial
  interpolation) to the Everett types}.
\newblock {\em {Proc. Edinb. Math. Soc., II. Ser.}}, 2:16--19.

\bibitem[Macintyre, 1954]{MR0059366}
Macintyre, A.~J. (1954).
\newblock Interpolation series for integral functions of exponential type.
\newblock {\em Trans. Amer. Math. Soc.}, 76:1--13.

\bibitem[{P\'olya}, 1921]{zbMATH02601316}
{P\'olya}, G. (1921).
\newblock {\"Uber die kleinsten ganzen Funktionen, deren s\"amtliche
  Derivierten im Punkte $z=0$ ganzzahlig sind. (Auszug aus einem an T. Kubota
  gerichteten Briefe)}.
\newblock {\em {Tohoku Math. J.}}, 19:65--68.

\bibitem[Poritsky, 1932]{MR1501639}
Poritsky, H. (1932).
\newblock On certain polynomial and other approximations to analytic functions.
\newblock {\em Trans. Amer. Math. Soc.}, 34(2):274--331.

\bibitem[Sato, 1971]{MR0280715}
Sato, D. (1971).
\newblock On the type of highly integer valued entire functions.
\newblock {\em J. Reine Angew. Math.}, 248:1--11.

\bibitem[Sato, 1985]{MR789191}
Sato, D. (1985).
\newblock Utterly integer valued entire functions. {I}.
\newblock {\em Pacific J. Math.}, 118(2):523--530.

\bibitem[Sato and Straus, 1964]{MR0159945}
Sato, D. and Straus, E.~G. (1964).
\newblock Rate of growth of {H}urwitz entire functions and integer valued
  entire functions.
\newblock {\em Bull. Amer. Math. Soc.}, 70:303--307.

\bibitem[Sato and Straus, 1965]{MR0192030}
Sato, D. and Straus, E.~G. (1965).
\newblock On the rate of growth of {H}urwitz functions of a complex or
  {$p$}-adic variable.
\newblock {\em J. Math. Soc. Japan}, 17:17--29.

\bibitem[{Schoenberg}, 1936]{zbMATH03021531}
{Schoenberg}, I.~J. (1936).
\newblock {On certain two-point expansions of integral functions of exponential
  type}.
\newblock {\em {Bull. Am. Math. Soc.}}, 42:284--288.

\bibitem[Straus, 1950]{MR0035822}
Straus, E.~G. (1950).
\newblock On entire functions with algebraic derivatives at certain algebraic
  points.
\newblock {\em Ann. of Math. (2)}, 52:188--198.

\bibitem[{Whittaker}, 1933]{zbMATH02543645}
{Whittaker}, J.~M. (1933).
\newblock {On Lidstone's series and two-point expansions of analytic
  functions}.
\newblock {\em {Proc. Lond. Math. Soc. (2)}}, 36:451--469.

\bibitem[{Whittaker}, 1935]{zbMATH02532117}
{Whittaker}, J.~M. (1935).
\newblock {\em {Interpolatory function theory}}, volume~33.
\newblock Cambridge University Press, Cambridge.

\end{thebibliography}

 \vskip 2truecm plus .5truecm minus .5truecm 
 
 \null
 \hfill
\vbox{\hbox{{\sc Michel WALDSCHMIDT,} Sorbonne Universit\'e, Faculté Sciences et Ingenierie} 
	\hbox{CNRS, Institut Mathématique de Jussieu Paris Rive Gauche IMJ-PRG, 75005 Paris, France }
	\hbox{E-mail: \url{michel.waldschmidt@imj-prg.fr}} 
	\hbox{Url: \url{http://www.imj-prg.fr/~michel.waldschmidt}}	
}	

\end{document}